\newcommand{\trans}{^\top}
\renewcommand*\env@matrix[1][c]{\hskip -\arraycolsep
  \let\@ifnextchar\new@ifnextchar
  \array{*\c@MaxMatrixCols #1}}
\newtheorem{theorem}{Theorem}[section]
\newtheorem{proposition}[theorem]{Proposition}
\newtheorem{lemma}[theorem]{Lemma}
\newtheorem{cor}[theorem]{Corollary}
\newtheorem{question}[theorem]{Question}
\newtheorem*{question*}{Motivating Questions}
\newtheorem{conj}[theorem]{Conjecture}
\theoremstyle{definition}
\theoremstyle{remark}
\author[Klanderman, Montee, Piotrowski, Rice, Shader]
{Sarah Klanderman$^1$, MurphyKate Montee$^2$, Andrzej Piotrowski$^3$, Alex Rice$^4$, and Bryan Shader$^5$}
\thanks{$^1$ Marian University, sklanderman@marian.edu\\
\phantom{aa} $^2$ Carleton College, mmontee@carleton.edu
\\
\phantom{aa,}$^3$ University of Alaska Southeast, apiotrowski@alaska.edu\\
\phantom{aa,}$^4$ Millsaps College,  riceaj@millsaps.edu
\\
\phantom{aa,}$^5$ University of Wyoming, bshader@uwyo.edu
}
\date{}
\title{Determinants of Seidel Tournament Matrices}
\begin{document}

\begin{abstract}
The Seidel matrix of a tournament on $n$ players is an $n\times n$ skew-symmetric matrix with entries in $\{0, 1, -1\}$ that encapsulates the outcomes of the games in the given tournament. It is known that 
the determinant of an $n\times n$ Seidel matrix is $0$ if $n$ is odd, and is an odd perfect square if $n$ is even.  This leads to 
the study of the set
\[ \mathcal{D}(n)= \{ \sqrt{\det S}: \mbox{ $S$ is an $n\times n$ Seidel matrix}\}.  \]
This paper studies various questions about $\mathcal{D}(n)$.
It is shown that  $\mathcal{D}(n)$ is a proper subset of $\mathcal{D}(n+2)$ for every positive even integer, and  
every odd integer in the interval $[1, 1+n^2/2]$ is in $\mathcal{D}(n)$ for $n$ even.
The expected value and variance of $\det S$ over the $n\times n$ Seidel matrices chosen uniformly at random is determined, and 
upper bounds on $\max \mathcal{D}(n)$ are given, and related to the Hadamard conjecture.
Finally, it is shown that for infinitely many $n$, $\mathcal{D}(n)$
contains a gap (that is, there are odd integers $k<\ell <m$ 
such that $k, m \in \mathcal{D}(n)$ but $\ell \notin \mathcal{D}(n)$) and several properties of the characteristic polynomials of Seidel matrices are established. 
\end{abstract}
\maketitle

\bigskip\noindent
{\bf Keywords:} Tournaments, skew-symmetric matrix, determinants,
Seidel matrix, Pfaffian.

\noindent
{\bf MSC 2020:} 53C20, 15A15.
\section{Introduction}
In combinatorics, a \textit{tournament of order $n$} is a digraph $T$ that represents the results of a competition involving $n$ players,
with one game between each pair of players and no ties. Typically, we take the $n$ players to be $1$, \ldots, $n$. 
In this setting $T$ has vertices $1$, \ldots, $n$ and an arc $ij$ from $i$ to $j$ provided player $i$ 
beats player $j$ in the game between $i$ and $j$, and for each $i\neq j$ exactly one of the arcs $ij$ or $ji$ is present in the tournament $T$.   

\begin{figure}[htbp]
    \centering
    \includegraphics[width = .2\textwidth]{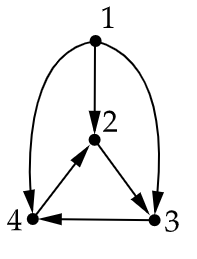}
    \caption{The diamond tournament.}
    \label{fig:diamond}
\end{figure}

Associated with $T$ are two $n\times n$ matrices.  The \textit{adjacency matrix} of $T$ is the $n\times n$ matrix
$A=[a_{ij}]$ where $a_{ij}=1$ if $ij$ is a directed arc in $T$ and $a_{ij}=0$ otherwise.  The \textit{Seidel matrix} of $T$
is the $n\times n$ skew-symmetric matrix $S=[s_{ij}]$ where $s_{ii}=0 $ for $i=1, \ldots, n$, $s_{ij}=1$ if $ij$ in $T$, 
and $s_{ij}=-1$, otherwise. 
For example, the tournament in \Cref{fig:diamond} has adjacency matrix $A$ and Seidel matrix $S$, where

    \[
    A = \begin{bmatrix}[r]
        0 & 1 & 1 &1 \\
        0 & 0 & 1 & 0 \\
        0 & 0 & 0 & 1 \\
        0 & 1 & 0 & 0
        \end{bmatrix}
\qquad\mbox{ and } \qquad 
    S = \begin{bmatrix}[r]
        0 & 1 & 1 & 1 \\
        -1& 0 & 1 & -1\\
        -1& -1& 0 & 1 \\
        -1& 1 & -1 & 0
        \end{bmatrix}.
    \]
The adjacency matrix and Seidel matrix of $T$ are related by the following:
\begin{itemize}
\item[(a)] $A+A{\trans}=J-I$, where  $J$ is the all ones matrix and $I$ is the identity matrix of appropriate size, and
\item[(b)] $S=A-A{\trans}=2A+I-J$.
\end{itemize}
Historically, the adjacency matrix has been used to study tournaments (see for example the results in the delightful book 
\cite{Moon}, and papers like \cite{deCaen,KS} that study the spectra of these adjacency matrices). More recently, the Seidel matrix has become a fruitful area of research. For example, \cite{BSTZ} studies \emph{$k$-spectrally monomorphic} adjacency and Seidel matrices (i.e., matrices for which the $k\times k$ principal submatrices all have the same characteristic polynomial), and \cite{BC} studies Seidel matrices for which the principal minors are bounded by a fixed integer $k$, for various values of $k$. The \emph{energy} of a Seidel matrix, that is the sum of the moduli of its eigenvalues, is studied in \cite{Ito}.  Seidel matrices are also related to \emph{skew-conference matrices}, that is, $n\times n$ skew-symmetric matrices $M$ with entries in $\{0, \pm 1\}$ and $MM\trans = (n-1)I$. 

In this paper we explore the set of all possible determinants of Seidel matrices of order $n$. Using SageMath, we compute these sets for small $n$. The data is shown in \cref{fig: determinant sets}. Some properties of these sets follow immediately from the structure of a Seidel matrix. For example, since each Seidel matrix $S$ of order $n$ is skew-symmetric, we know that $\det S = \det S{\trans} = (-1)^n\det S$, so $\det S = 0$ whenever $n$ is odd. Since eigenvalues of real skew-symmetric matrices occur in conjugate pairs and are purely imaginary, $\det S \geq 0$. Furthermore, if $J$ denotes the all ones matrix, we see that $S \equiv J - I \mod 2$. But $\det(J-I) = (-1)^{n-1}(n-1)$, so $\det S \equiv n-1\mod 2$. Hence, for all even $n$, $\det S$ is odd. Furthermore, a classical result of Cayley \cite{Cayley} shows that if the matrix $K$ is skew-symmetric, then the determinant of $K$ is the square of the \emph{Pfaffian} of $K$. Thus $\det(S)$ is the square of an odd integer.

\begin{figure}
    \centering
    \begin{tabular}{l|l}
     $n$&  possible determinants of Seidel matrices of order $n$\\
     \hline
     1& 0\\
     2& 1\\
     3& 0\\
     4& $1, 3^2$\\
     5& 0\\
     6& $1, 3^2, 5^2, 7^2, 9^2$\\
     7& 0\\
     8& $1, 3^2, 5^2, \dots, 27^2; 31^2, 33^2, 35^2; 49^2$\\ 
     9& 0\\
     10& $1, 3^2, 5^2, \dots, 129^2; 133^2, 135^2, 137^2, \dots, 143^2; 147^2; 153^2; 161^2; 165^2; 175^2; 183^2$ \\
     11& 0\\
     12& $1^2, \dots, 701^2; 705^2, \dots, 723^2; 727^2,  \dots, 751^2; 755^2, \dots, 769^2; 773^2, \dots, 781^2;$ \\
        &$789^2, \dots, 795^2; 799^2; 803^2, 805^2, 807^2; 811^2; 817^2; 825^2, \dots 835^2; 839^2, 841^2;$ \\
        &$ 847^2; 855^2; 861^2; 867^2, 869^2; 873^2; 877^2; 891^2; 905^2; 931^2; 945^2; 979^2; 1089^2; 1331^2$
\end{tabular}
    \caption{ }
    \label{fig: determinant sets}
\end{figure}

The following question is asked in \cite{BBCL}:
\begin{question}\label{question}
    For all odd positive $k$, does there exist some $n_k$ so that $k \in \mathcal{D}(n_k)$?
\end{question}
We answer this in the affirmative in \Cref{sec: small values}.

More generally, we define and study
    \[
        \mathcal{D}(n) = \{\sqrt{\det S} \mid S \mbox{ is a Seidel matrix of order }n\}.
    \]
The data in \cref{fig: determinant sets} leads to the following additional questions.

\begin{question*} \rm Let $n$ be a positive even integer.
    \begin{itemize}
    \item[(a)] Is it the case that $\mathcal{D}(n) \subseteq \mathcal{D}(n+2)$? If so, is the containment strict?
    \item[(b)] Is $1$ the smallest element of $\mathcal{D}(n)$? What is the largest element of $\mathcal{D}(n)$? 
    \item[(c)]  How are values in $D(n)$ distributed?
    \item[(d)] For $n\geq 8$ is there always a gap in $\mathcal{D}(n)$, i.e., an odd $k$ such that $k\notin \mathcal{D}(n)$
and $k$ is between the minimum and maximum element of $\mathcal{D}(n)$? If there is a gap, what is the smallest (or largest) such $k$?
    \item[(e)] Can one give  necessary and sufficient conditions for an integer to be in $\mathcal{D}(n)$?
\end{itemize}
\end{question*}

The remainder of this paper is organized around these questions.  In \cref{sec: montonicity} we give ways to compute determinants of Seidel matrices from determinants of related matrices, and we use these techniques to answer both parts of Question (a) in the affirmative. In \cref{sec: small values} we show that when $n$ is even, 
each odd integer in the interval $[1,1+n^2/2]$
is in $\mathcal{D}(n)$, addressing the latter half of Question (d). In \cref{sec: average}, we address Question (c) by calculating the expected value of the determinant of a Seidel matrix chosen uniformly at random, and give a recursive formula for the variance. 
In \cref{sec: large}, we
give a partial answer to the latter half of Question (b), and 
we relate Question (e) to the Hadamard conjecture. 
In \cref{sec: gaps} we study Question (d) and show that for infinitely many values $n$ there is a gap in $\mathcal{D}(n)$. In \Cref{sec: charpoly} we investigate possible characteristic polynomials of Seidel matrices, with a focus on principal submatrices of skew-conferences matrices, and we calculate the expected characteristic polynomial of a Seidel matrix chosen uniformly at random.

\subsection*{Acknowledgements}
The authors would like to thank the American Institute of Mathematics (AIM) and the REUF program (NSF DMS-2015462) for supporting this work.  The authors would also like to thank Xin Tang of Fayetteville State University for his participation in the project at the AIM REUF workshop.

\section{Monotonicity of $\mathcal{D}(n)$}\label{sec: montonicity}

The primary goal of this section is to show that $\mathcal{D}(n)\subsetneq \mathcal{D}(n+2)$ for all even $n$. We first show that $\mathcal{D}(n)\subseteq\mathcal{D}(n+2)$. 
To do this we will use the Sherman-Morrison-Woodbury formula 
(see \cite[Theorem 10.11]{BRoy}), and the following lemma. 

\bigskip\noindent
{\bf Sherman-Morrison-Woodbury formula.}
\\
{\it Let $A$ be an invertible $n\times n$ matrix, and $X$ 
and $Y$ be $n\times k$ matrices. Then 
\[ \det (A +XY\trans)= \det A \cdot \det ( I+Y\trans A^{-1}X).
\]
}

\bigskip\noindent
If $S$ is an invertible $n\times n$ matrix and  
$W$ is an $n\times n$ rank-one matrix, then $W=xy\trans$ for some 
vectors $x$ and $y$, and hence by the Sherman-Morrison-Woodbury 
formula $\det (S + tW)= \det S (1+ ty\trans S^{-1} x) $ is a linear function in $t$. We use this observation in the next lemma. 

\begin{lemma}
\label{lem:skew-non}
Let $T$ be a tournament of order $n$ with $n$ even, and let $A$ and $S$ be the adjacency and Seidel matrices of $T$, respectively. Then $\det S= \det (2A+I)$.
\end{lemma}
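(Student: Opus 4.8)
The plan is to recognize $2A+I$ as a rank-one perturbation of $S$ and apply the Sherman-Morrison-Woodbury formula quoted above. By relation (b), $S = 2A+I-J$, hence $2A+I = S+J$. Writing $\mathbf{1}$ for the all-ones column vector of length $n$, we have $J = \mathbf{1}\mathbf{1}\trans$, a rank-one matrix. Also, as observed in the introduction, for even $n$ the determinant $\det S$ is odd, hence nonzero, so $S$ is invertible --- precisely the hypothesis needed to apply the formula with $A\mapsto S$ and $X=Y=\mathbf{1}$.

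Doing so yields
\[
\det(2A+I) = \det(S+\mathbf{1}\mathbf{1}\trans) = \det S\cdot\bigl(1+\mathbf{1}\trans S^{-1}\mathbf{1}\bigr),
\]
so the statement reduces to showing $\mathbf{1}\trans S^{-1}\mathbf{1}=0$. This holds because $S^{-1}$ is skew-symmetric: $(S^{-1})\trans = (S\trans)^{-1} = (-S)^{-1} = -S^{-1}$; and for any skew-symmetric matrix $B$ the quadratic form $x\trans Bx$ vanishes for every $x$, since $x\trans Bx = (x\trans Bx)\trans = x\trans B\trans x = -x\trans Bx$. Therefore $\det(2A+I) = \det S$.

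I do not anticipate a real obstacle here; the only points requiring care are to note explicitly that $S$ is invertible (already recorded in the introduction) so that the formula applies, and the short sign computation showing the rank-one correction term vanishes. If one wishes to sidestep the invertibility hypothesis entirely, the same conclusion follows from the identity $\det(S+t\mathbf{1}\mathbf{1}\trans) = \det S + t\,\mathbf{1}\trans\operatorname{adj}(S)\mathbf{1}$ (valid for all $t$ since $\mathbf{1}\mathbf{1}\trans$ has rank one) together with the fact that $\operatorname{adj}(S)$ is skew-symmetric whenever $S$ is skew-symmetric of even order, as $\operatorname{adj}(S)\trans = \operatorname{adj}(S\trans) = \operatorname{adj}(-S) = (-1)^{n-1}\operatorname{adj}(S) = -\operatorname{adj}(S)$.
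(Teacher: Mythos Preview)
Your proof is correct and follows essentially the same route as the paper: both recognize $2A+I=S+J$ as a rank-one perturbation of $S$ and invoke the Sherman--Morrison--Woodbury formula (with the invertibility of $S$ justified by the parity argument from the introduction). The only cosmetic difference is that the paper argues the linear function $f(t)=\det(S+tJ)$ is constant by showing $f(1)=f(-1)$ via the symmetry $\det(S+J)=\det(-(2A\trans+I))=\det(S-J)$, whereas you compute the slope $\mathbf{1}\trans S^{-1}\mathbf{1}$ directly and observe it vanishes by skew-symmetry of $S^{-1}$; your version is arguably the more direct of the two.
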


\begin{proof}
Note that 
    \begin{equation}\label{det-form1}
        S+J=2A+I.
    \end{equation}
Transposing and negating both sides of (\ref{det-form1}) gives 
    \begin{equation}\label{det-form2}
        S-J= -(2A\trans+I).
    \end{equation}
Taking determinants of both sides of (\ref{det-form1}) and (\ref{det-form2}) and using the fact that $n$ is even, we get 
    \begin{equation}\label{det-form3}
        \det (S+J)=\det (S-J).
    \end{equation}
Let $f(t)= \det (S+tJ)$ for $t\in \mathbb{R}$. As $J$ is a rank 1 matrix, $f(t)$ is a linear function of $t$. The linearity of $f$ and (\ref{det-form3}) imply $f(t)= \det S$ for all $t$. In particular, $\det (2A+I)= f(1)=\det S$. \qedhere
\end{proof}

Note in particular, if $R_k$ is the Seidel matrix corresponding to the \textit{transitive tournament} with $k$ players (that is, $R_k$ is the skew-symmetric matrix with all ones above the main diagonal), then \cref{lem:skew-non} implies that $\det R_k=1$ when $k$ is even.

We also need a construction introduced in 
\cite{BBCL} that builds a new tournament from two tournaments of smaller order.
Let $T_1$ and  $T_2$ be tournaments. The \emph{join of $T_1$ on $T_2$}, denoted $T_1 \to T_2$, is the tournament obtained from the disjoint union of $T_1$ and $T_2$ by adding each arc $ij$ from a vertex in $T_1$ to a vertex in $T_2$. Note that if $S_1$ and  $S_2$ are the Seidel matrices of $T_1$ and  $T_2,$ respectively, then the Seidel matrix of $T_1\to T_2$ is the block matrix

\[ S = \left[ \begin{array}{r|r}
        S_1 &J \\ \hline
        -J\trans & S_2
    \end{array} \right]
    \]

The following lemma is proven in \cite{BBCL}. 
Here we give a different proof; one that uses 
\cref{lem:skew-non}.

\begin{lemma}\label{lem:seidel of a join}
    Let $T_1$ and  $T_2$ be tournaments, with Seidel matrices $S_1$ and  $S_2$. Let $S$ be the Seidel matrix of the join $T_1 \to T_2$. 
    If at least one of $T_1$ and $T_2$ has even order, then 
        \[\det\; S = \det\; S_1 \cdot \det\; S_2.\]
\end{lemma}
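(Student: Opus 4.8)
The plan is to imitate the proof of \Cref{lem:skew-non}: perturb $S$ by a scalar multiple of the \emph{full} all-ones matrix $J_n$ (where $n=n_1+n_2$ is the order of $S$ and $n_i$ is the order of $T_i$), and then exploit two features of this single perturbation at once — that $J_n$ has rank one, so $t\mapsto\det(S+tJ_n)$ is affine, and that adding $J_n$ to the block form of $S$ annihilates its lower-left off-diagonal block, making the matrix block triangular.

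First the easy case. If exactly one of $n_1,n_2$ is even, then the other is odd, so $n$ is odd; hence the skew-symmetric matrix $S$ is singular, $\det S=0$, and likewise the odd-order matrix among $S_1,S_2$ is singular, so $\det S_1\cdot\det S_2=0=\det S$. So assume henceforth that $n_1$ and $n_2$ are both even, whence $n$ is even.

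Now set $f(t)=\det(S+tJ_n)$, which is affine in $t$ since $J_n$ has rank one, exactly as in \Cref{lem:skew-non}. Also $(S+tJ_n)\trans=-S+tJ_n=-(S-tJ_n)$, so, using that $n$ is even, $f(t)=\det(S-tJ_n)=f(-t)$; an even affine function is constant, so $\det S=f(0)=f(1)=\det(S+J_n)$. Writing $J_{ab}$ for the $n_a\times n_b$ all-ones block, adding $J_n$ to the block form of $S$ turns the lower-left block $-J_{12}\trans$ into $0$, giving
\[
S+J_n=\begin{bmatrix} S_1+J_{11} & 2J_{12}\\ 0 & S_2+J_{22}\end{bmatrix},
\]
which is block upper-triangular; hence $\det(S+J_n)=\det(S_1+J_{11})\cdot\det(S_2+J_{22})$. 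By identity (b) of the introduction, $S_i+J_{ii}=2A_i+I$ where $A_i$ is the adjacency matrix of $T_i$, and since $n_i$ is even, \Cref{lem:skew-non} gives $\det(2A_i+I)=\det S_i$. Therefore $\det S=\det S_1\cdot\det S_2$.

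I expect the only real obstacle to be the realization in the first paragraph that the perturbation $tJ_n$ does double duty — rank-one linearity plus block triangularity at $t=1$ — and that \Cref{lem:skew-non} is precisely the tool that removes the leftover "$+J_{ii}$" on the diagonal blocks; the rest is bookkeeping of parities so that both \Cref{lem:skew-non} and the ``$f$ is even'' step apply. As a cross-check, when $n_1$ is even the matrix $S_1$ is invertible (its determinant is a nonzero square), and the Schur-complement expansion $\det S=\det S_1\cdot\det\bigl(S_2+J_{12}\trans S_1^{-1}J_{12}\bigr)$ together with $\mathbf 1\trans S_1^{-1}\mathbf 1=0$ (valid for any skew-symmetric $S_1$, since $S_1^{-1}$ is skew-symmetric) collapses the correction term and gives $\det S=\det S_1\det S_2$ in one line, irrespective of the parity of $n_2$ — though that route does not pass through \Cref{lem:skew-non}.
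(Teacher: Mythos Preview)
Your proof is correct and is essentially the paper's argument: both rest on the observation that $S+J_n=2A+I$ is block upper triangular and then invoke \Cref{lem:skew-non} on the diagonal blocks; the only cosmetic difference is that you re-derive $\det S=\det(S+J_n)$ by repeating the affine--even trick, whereas the paper simply applies \Cref{lem:skew-non} once more to the full even-order matrix $S$ to obtain $\det S=\det(2A+I)$ directly. Your parenthetical Schur-complement route (using $\mathbf 1\trans S_1^{-1}\mathbf 1=0$ for skew-symmetric $S_1^{-1}$) is a genuinely independent one-line alternative that the paper does not give.
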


\begin{proof}
    If the orders of $T_1$ and $T_2$ are of different parity, then 
    one of $S_1$ and $S_2$ has odd order, and $S$ has odd order.
    Hence, both $\det\, S_1\cdot \det\, S_2$ and $\det\, S$ equal $0$.

    Now assume that both $T_1$ and $T_2$ have even order. 
    Let $A_i$  denote the adjacency matrices of $T_i$ for $i \in \{1, 2\}$. Let $A$ and $S$ denote the adjacency and Seidel matrices, respectively, of $T_1 \to T_2$. Then 
        $$  A= \left[ \begin{array}{c|c}
            A_1 & J \\ \hline
           O & A_2 \end{array} 
            \right],
        $$
    where $J$ is the matrix of all ones and $O$ is the matrix of all zeroes. Thus $2A+I$ is block upper triangular, and we have
        \[
            \det\, S = \det (2A+I)= \det (2A_1+I) \det (2A_2+I)= \det\,S_1 \cdot \det\, S_2,
        \]
by \cref{lem:skew-non}.
\end{proof}

\begin{cor}\label{cor:contain}
    Let $n=k +\ell$ where $k$ and $\ell$ are even positive integers. Then $\mathcal{D}(k)\mathcal{D}(\ell) \subseteq \mathcal{D}(n)$. In particular, for each positive even integer $n$, $\mathcal{D}(n) \subseteq \mathcal{D}(n+2)$.
\end{cor}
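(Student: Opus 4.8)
The plan is to deduce \Cref{cor:contain} almost immediately from \Cref{lem:seidel of a join}. First I would prove the inclusion $\mathcal{D}(k)\mathcal{D}(\ell)\subseteq\mathcal{D}(n)$: given any $d_1\in\mathcal{D}(k)$ and $d_2\in\mathcal{D}(\ell)$, choose tournaments $T_1$ and $T_2$ of orders $k$ and $\ell$ with $\det S_i = d_i^2$, form the join $T_1\to T_2$, and observe that its Seidel matrix $S$ has order $n=k+\ell$ and, by \Cref{lem:seidel of a join} (applicable since $k$ is even), satisfies $\det S = \det S_1\cdot\det S_2 = (d_1d_2)^2$. Hence $d_1d_2 = \sqrt{\det S}\in\mathcal{D}(n)$.

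For the ``in particular'' clause, I would specialize to $\ell=2$, so $n=k+2$ with $k$ even (the case of interest; for the base relation one wants $k$ to range over all even positive integers so that $k+2$ ranges over all even integers $\ge 4$, and $\mathcal{D}(2)=\{1\}$ covers... ) — more precisely, take any even $n\ge 2$ and write $n+2 = n + 2$. Since $\mathcal{D}(2)=\{1\}$ (the unique $2\times 2$ Seidel matrix has determinant $1$), the first part gives $\mathcal{D}(n)\cdot\{1\} = \mathcal{D}(n)\subseteq\mathcal{D}(n+2)$. This handles every positive even integer $n$.

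There is essentially no obstacle here — the corollary is a packaging of the join lemma — so the only thing to be careful about is bookkeeping: confirming that $\mathcal{D}(2)=\{1\}$ (equivalently $R_2$ has determinant $1$, which also follows from the remark after \Cref{lem:skew-non}), and making sure the parity hypothesis of \Cref{lem:seidel of a join} is met (it is, since both $k$ and $\ell$ are assumed even, and in the specialization $\ell = 2$ is even). I would write this up in a few lines and not belabor it.

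\begin{proof}
Let $d_1\in\mathcal{D}(k)$ and $d_2\in\mathcal{D}(\ell)$. Choose tournaments $T_1$ of order $k$ and $T_2$ of order $\ell$ whose Seidel matrices $S_1,S_2$ satisfy $\det S_1 = d_1^2$ and $\det S_2 = d_2^2$. Let $S$ be the Seidel matrix of the join $T_1\to T_2$; it has order $k+\ell = n$. Since $k$ is even, \Cref{lem:seidel of a join} gives $\det S = \det S_1\cdot\det S_2 = (d_1 d_2)^2$, so $d_1 d_2 = \sqrt{\det S}\in\mathcal{D}(n)$. This proves $\mathcal{D}(k)\mathcal{D}(\ell)\subseteq\mathcal{D}(n)$.

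For the last statement, fix a positive even integer $n$. The unique Seidel matrix of order $2$ is $\left[\begin{smallmatrix} 0 & 1 \\ -1 & 0\end{smallmatrix}\right]$, which has determinant $1$, so $\mathcal{D}(2) = \{1\}$. Applying the inclusion just proved with $k = n$ and $\ell = 2$ yields $\mathcal{D}(n) = \mathcal{D}(n)\mathcal{D}(2)\subseteq\mathcal{D}(n+2)$.
\end{proof}
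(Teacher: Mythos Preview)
Your proof is correct and follows essentially the same route as the paper: apply \Cref{lem:seidel of a join} to the join of two tournaments to get $\mathcal{D}(k)\mathcal{D}(\ell)\subseteq\mathcal{D}(n)$, then specialize to $\ell=2$. One trivial slip: there are two Seidel matrices of order $2$ (one for each orientation of the single edge), not a unique one, but both have determinant $1$, so $\mathcal{D}(2)=\{1\}$ as you claim.
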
 

\begin{proof}
Let $S_k$ be the Seidel matrix of a tournament of order $k$, and let $S_{\ell}$ be the Seidel matrix of a tournament of order $\ell$.  Then $S_k \rightarrow S_{\ell}$
is a Seidel matrix of a tournament of order $k + \ell$ having determinant $\det S_k \det  R_{\ell}$ by \Cref{lem:seidel of a join}. Thus, $\mathcal{D}(k) \mathcal{D}(\ell) 
\subseteq \mathcal{D}(n)$.
Taking $k=n$ and $\ell=2$ gives
$D(n) \subseteq D(n+2)$ for each positive even integer $n$. 
\end{proof}

We show that $\mathcal{D}(n) \neq \mathcal{D}(n+2)$ with a more subtle construction.  The tournaments $T$ and  $T'$ are \emph{switching equivalent} if there exists a subset of teams $\alpha$ so that changing the orientation on every edge $ik$, where $i \in \alpha$ and $k \notin \alpha$, turns $T$ into $T'$. Equivalently, if $S$ and $S'$ are the Seidel matrices of $T$ and $T'$, then $T$ and $T'$ are switching equivalent if and only if there is a diagonal matrix $D$, each of whose diagonal entries is $\pm 1$, such that $DSD=S'$.
Thus, switching equivalent tournaments  have Seidel matrices with the same determinants (though possibly opposite Pfaffians). 
It follows that for $n\geq 4$, there are tournaments of order $n$ that are not switching equivalent.  If instead we are able to change a single arc at a time, we can turn any tournament into any other of the same order by reversing a set of arcs. 

Let $T$ be a tournament of order $n$. The \emph{$(i,j)$-reversal of $T$} is the tournament $T'$ whose edges have  the same orientations as $T$ except the edge joining $i$ and $j$  which has the reverse orientation. The following lemma gives a formula to find the determinant of the $(i,j)$-reversal of a tournament.

\begin{lemma}\label{lem: inverse-det property}
Let $S=[s_{k\ell}]$ be the Seidel matrix for a tournament $T$. Let $T'$ be the $(i,j)$-reversal of $T$. Let $S'$ be the Seidel matrix for $T'$, and suppose that $s_{ij} = 1$. Then
    \[
        \det S'  = \det S \cdot (1+ 2S^{-1}_{ij})^2.
    \]
In particular,
    \begin{enumerate}
        \item[\rm (a)]$\det S' >\det(S)$ if $S^{-1}_{ij} >0$ or $S^{-1}_{ij}<-1$, 
        \item[\rm (b)] $\det S' = \det S $ if $S^{-1}_{ij} \in \{0, -1\}$, and 
        \item[\rm (c)] $\det S' <\det S $ if $-1<S^{-1}_{ij}<0$,
    \end{enumerate}
    where $S^{-1}_{ij}$ denotes the $(i,j)$-entry of $S^{-1}$.
\end{lemma}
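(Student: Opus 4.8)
The plan is to realize $S'$ as a rank-two additive perturbation of $S$ and then apply the Sherman--Morrison--Woodbury formula with $k=2$. Since $s_{ij}=1$, necessarily $s_{ji}=-1$, and forming the $(i,j)$-reversal flips exactly these two entries, so $s'_{ij}=-1$ and $s'_{ji}=1$ while all other entries of $S'$ agree with those of $S$. Hence $S'=S+W$ where, writing $e_1,\dots,e_n$ for the standard basis vectors, $W= -2\,e_ie_j\trans + 2\,e_je_i\trans$. This $W$ has rank two, and I would factor it as $W=XY\trans$ with $X=[\,e_i \mid e_j\,]$ and $Y=[\,-2e_j \mid 2e_i\,]$, both $n\times 2$.

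Next I would invoke the Sherman--Morrison--Woodbury formula; this is legitimate because $S$ is invertible (for $S^{-1}$ to be defined we need $n$ even, and then $\det S$ is a nonzero odd perfect square). It gives
\[ \det S' = \det S \cdot \det\!\big(I_2 + Y\trans S^{-1} X\big). \]
The key computational step is to evaluate the $2\times 2$ matrix $Y\trans S^{-1}X$. Written entrywise, its four entries are $\pm 2$ times $S^{-1}_{ji}$, $S^{-1}_{jj}$, $S^{-1}_{ii}$, $S^{-1}_{ij}$. Here I would use that $S$ skew-symmetric forces $S^{-1}$ skew-symmetric, since $(S^{-1})\trans = (S\trans)^{-1} = (-S)^{-1} = -S^{-1}$; therefore $S^{-1}_{ii}=S^{-1}_{jj}=0$ and $S^{-1}_{ji}=-S^{-1}_{ij}$. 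A one-line calculation then yields $Y\trans S^{-1}X = 2S^{-1}_{ij}\,I_2$, so $I_2 + Y\trans S^{-1}X = (1+2S^{-1}_{ij})I_2$, whose determinant is $(1+2S^{-1}_{ij})^2$. Combining with the displayed identity gives $\det S' = \det S\cdot(1+2S^{-1}_{ij})^2$.

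Finally, for parts (a)--(c): since $n$ is even, $\det S$ is a positive odd square, so the sign of $\det S' - \det S = \det S\,[(1+2S^{-1}_{ij})^2 - 1] = \det S\cdot 4\,S^{-1}_{ij}(S^{-1}_{ij}+1)$ equals the sign of $S^{-1}_{ij}(S^{-1}_{ij}+1)$. This product is positive exactly when $S^{-1}_{ij}>0$ or $S^{-1}_{ij}<-1$, is zero exactly when $S^{-1}_{ij}\in\{0,-1\}$, and is negative exactly when $-1<S^{-1}_{ij}<0$, which is precisely (a), (b), and (c).

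I expect the only genuine subtlety to be recognizing that $W$ has rank two rather than rank one, so that one must use the $k=2$ version of Sherman--Morrison--Woodbury instead of the ``linear in $t$'' trick used for \Cref{lem:skew-non}, and then exploiting the skew-symmetry of $S^{-1}$ to collapse the resulting $2\times 2$ correction matrix to a scalar matrix; the rest is bookkeeping. An alternative route --- two successive rank-one updates, first setting the $(i,j)$-entry to $-1$ and then the $(j,i)$-entry to $1$ --- is possible but more delicate, since the intermediate matrix is not skew-symmetric and could in principle fail to be invertible, so I would avoid it.
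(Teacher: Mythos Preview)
Your proposal is correct and follows essentially the same approach as the paper: both factor the rank-two perturbation $S'-S$ as a product of $n\times 2$ matrices, apply the Sherman--Morrison--Woodbury formula, and use the skew-symmetry of $S^{-1}$ to reduce the $2\times 2$ correction to $(1+2S^{-1}_{ij})I_2$. Your explicit factoring $\det S' - \det S = 4\det S\cdot S^{-1}_{ij}(S^{-1}_{ij}+1)$ to read off (a)--(c) is a nice touch that the paper leaves implicit.
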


\begin{proof}
Note that $S'$ is a sum of $S$ and a matrix 
$P=[p_{k\ell}]$, whose entries are $0$ except $p_{ij} = -2$ and  $p_{ji} = 2$. In particular, let $e_i$ and  $e_j$ be the $i$-th and $j$-th standard basis vector of appropriate order. Then 
    \[
        S' = S + 2\left[ \begin{array}{rr}
                                                e_i&e_j\\
                                               \end{array} \right] 
                                               \left[ \begin{array}{r}
                                                   -e_j\trans 
                                                   \\[2pt]
                                                   e_i\trans
                                               \end{array} \right].
    \]
Applying the Sherman-Morrison-Woodbury  formula, we get
        \begin{align*}
            \det S' 
                &= \det S \cdot \det \left( I + 2\begin{bmatrix}
                                                   -e_j\trans
                                                   \\
                                                   e_i\trans
                                               \end{bmatrix} 
                                               S^{-1}
                                               \begin{bmatrix}
                                                e_i&e_j\\
                                               \end{bmatrix} \right)\\
                &=\det S \cdot 
                \det\left(I + 2\begin{bmatrix} -e_j\trans S^{-1} e_i &-e_j\trans S^{-1}e_j \\
                                                    e_i\trans S^{-1} e_i &e_i\trans S^{-1}e_j
                                                    \end{bmatrix}
                                                    \right) \\
                &=\det S \cdot\det\left(I + 2\begin{bmatrix} -S^{-1}_{ji} &-S^{-1}_{jj} \\
                                                    S^{-1}_{ii} &S^{-1}_{ij}
                                                    \end{bmatrix}
                                                    \right). 
        \end{align*}
Since $S$ and $S^{-1}$ are skew-symmetric, $S_{ij}^{-1} = -S_{ji}^{-1}$ and $S^{-1}_{ii} = S^{-1}_{jj} = 0$. Therefore
    \begin{align*}
        \det\, S' &= \det\, S \cdot \det \begin{bmatrix} 1+2S^{-1}_{ij} &0 \\
                                                    0 &1+2S^{-1}_{ij}
                                                    \end{bmatrix}
                                                     \\
                 &= \det\; S \cdot (1 + 2S^{-1}_{ij})^2.           \qedhere                        
    \end{align*} 
\end{proof}

\begin{cor}
Let $S$ be the $n\times n$ Seidel matrix of a tournament $T$, where $n$ is even.  Then there exists an $(n+2)\times(n+2)$ Seidel matrix with strictly larger determinant than $\det S$.
\end{cor}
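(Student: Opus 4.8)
The plan is to climb two dimensions by first forming a join and then performing a single arc reversal. Let $R_2$ be the Seidel matrix of the transitive tournament on two vertices, so $\det R_2 = 1$ by \Cref{lem:skew-non}, and let $S'$ be the Seidel matrix of $T\to R_2$, where $T$ is the tournament with Seidel matrix $S$. By \Cref{lem:seidel of a join} (equivalently \Cref{cor:contain}) we have $\det S' = \det S\cdot \det R_2 = \det S$, and explicitly
\[
 S' = \begin{bmatrix} S & J \\ -J\trans & R_2 \end{bmatrix},
\]
where $J$ is the $n\times 2$ all-ones matrix; label the two new vertices $n+1$ and $n+2$. Since $n$ is even, $\det S$ is odd and hence nonzero, so $S$ and $S'$ are invertible. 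In $T\to R_2$ every vertex $k$ of $T$ beats both new vertices, so $s'_{k,n+1} = s'_{k,n+2} = 1$. Hence, by \Cref{lem: inverse-det property}, it suffices to exhibit a vertex $k$ of $T$ for which one of $(S')^{-1}_{k,n+1}$, $(S')^{-1}_{k,n+2}$ is positive or less than $-1$: the corresponding $(k,n+1)$- or $(k,n+2)$-reversal is then an $(n+2)\times(n+2)$ Seidel matrix of strictly larger determinant.

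To locate such an entry I would compute the off-diagonal $n\times 2$ block of $(S')^{-1}$. Set $v = S^{-1}\mathbf{1}$, where $\mathbf{1}$ is the all-ones vector of length $n$. The crucial observation is that $S^{-1}$ is skew-symmetric, so $\mathbf{1}\trans v = \mathbf{1}\trans S^{-1}\mathbf{1} = 0$. Solving $S'w = e_{n+2}$ in block form $w = (w_1,w_2)$, the top block gives $Sw_1 = -(\text{sum of entries of }w_2)\mathbf{1}$, hence $w_1 = -c\,v$ with $c$ the sum of the entries of $w_2$; the identity $\mathbf{1}\trans v = 0$ makes the $-J\trans w_1$ term in the bottom block vanish, so $R_2 w_2 = (0,1)\trans$, which pins down $w_2$, hence $c$, and yields $w_1 = v$. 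Thus $(S')^{-1}_{k,n+2} = v_k$, and the analogous computation with $e_{n+1}$ gives $(S')^{-1}_{k,n+1} = -v_k$. (Equivalently, the cancellation $\mathbf{1}\trans S^{-1}\mathbf{1} = 0$ makes the Schur complement of $S$ in $S'$ equal to $R_2$, so the off-diagonal block of $(S')^{-1}$ is $-S^{-1}JR_2^{-1}$, whose two columns simplify to $-v$ and $v$.)

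To conclude: $v\neq 0$, since $v = 0$ would force $\mathbf{1} = Sv = 0$; together with $\mathbf{1}\trans v = 0$ this means $v$ has a strictly positive entry. Choosing $k$ with $v_k > 0$ and reversing the arc from $k$ to $n+2$ produces a tournament $T''$ of order $n+2$ whose Seidel matrix $S''$ satisfies, by \Cref{lem: inverse-det property}, $\det S'' = \det S'\cdot(1+2v_k)^2 = \det S\cdot(1+2v_k)^2 > \det S$. The main obstacle is the block-inverse computation in the second paragraph; everything hinges on recognizing that skew-symmetry forces $\mathbf{1}\trans S^{-1}\mathbf{1} = 0$, which collapses the Schur complement and shows the relevant inverse entries are exactly $\pm(S^{-1}\mathbf{1})_k$. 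Once that is in place, the sign analysis and the choice of $k$ are immediate; the only other point to record is the invertibility of $S$, which holds because $\det S$ is odd whenever $n$ is even.
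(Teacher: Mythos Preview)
Your proof is correct and follows essentially the same route as the paper's: form the join with a two-vertex tournament, observe that the Schur complement collapses because $\mathbf{1}\trans S^{-1}\mathbf{1}=0$, read off the relevant off-diagonal block of the inverse, and use the zero-sum/nonzero argument to find a positive entry to which \Cref{lem: inverse-det property} applies. The only cosmetic differences are that the paper joins in the other order ($T_2\to T$ rather than $T\to R_2$) and phrases the vanishing as $J(-S_2^{-1}JS^{-1})=0$ rather than your explicit identification of the two columns as $\pm S^{-1}\mathbf{1}$.
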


\begin{proof}

Let $T_2$ be the tournament with Seidel matrix $S_2 = \left[ \begin{array}{rr} 0&1\\ -1&0 \end{array} \right]$, and let $B$ be the Seidel matrix of $T_2 \to T$. Then 
    \[
    B = \left[ \begin{array}{c|c} {S_2} & J
\\ \hline
-J\trans & S \end{array} \right]
    \]
and by \Cref{lem:seidel of a join}, $\det B = \det S$. 
Direct calculation shows that 
\[
B^{-1} = \left[ \begin{array}{c|c} {S_2}^{-1} & -{S_2}^{-1} JS^{-1}
\\ \hline
S^{-1} J\trans {S_2}^{-1} & S^{-1} \end{array} \right].
\]

Since $J(-S_2^{-1} JS^{-1}) = 0$, each column-sum of $-S_2^{-1}J S^{-1}$ is zero. On the other hand, since $J$ is nonzero we have $-S_2^{-1}JS^{-1}$
is nonzero. Thus, some entry of $-S_2^{-1} JS^{-1}$ is positive.
Every corresponding entry of $B$ is 1, so by \Cref{lem: inverse-det property} reversing the arc corresponding to that entry results in an $(n+2)\times (n+2)$ Seidel matrix whose determinant is greater than that of $B$, and hence that of $S$.
\end{proof}

\section{Small Values of $\mathcal{D}(n)$} \label{sec: small values}

In this section we investigate which small values of $k$ belong to $\mathcal{D}(n)$, $n$ even.

Recall that $R_n$ denotes the Seidel matrix of the transitive tournament  of order $n$. The basic {\it nega-circulant matrix} $W_n$ of order $n$ is the $n\times n$ matrix with one in positions $(1,2)$, \ldots, $(n-1,n)$; negative one in position $(n,1)$; and zeroes elsewhere.  
For each integer $m$ with $1\leq m \leq n-1$, 
 it can be seen that ${W_n}^m$ is the matrix with one in positions 
$(1,m+1)$, 
\ldots, $(n-m,n)$; negative one in positions $(n-m+1, 1)$, 
\ldots, $(n, m)$ and zeroes elsewhere.  Additionally, ${W_n}^n=-I$.

\begin{lemma}\label{lem: det rn is 1}
The Seidel matrix $R_n$ of the transitive tournament of order $n$ satisfies
\[\det R_n=\left\{ \begin{array}{rl} 0 & \mbox{ if $n$ is odd, and}\\
1 & \mbox{ if $n$ is even.} \end{array} \right. 
\]
Moreover, if $n$ is even then 
\[ 
      R_n^{-1} =  \left[ \begin{array}{rrrrrrr}
            0 & -1 & 1 & \cdots & 1 & -1 \\
            1 & 0 & -1 & 1 &  & 1\\
            -1 & 1 & \ddots & \ddots & \ddots & \vdots\\
            \vdots &\ddots & \ddots & \ddots & \ddots & 1 \\
            -1 & & \ddots& 1 & 0 & -1\\
            1 & -1 & \cdots & -1 & 1 & 0
        \end{array}\right] .
    \] 
\end{lemma}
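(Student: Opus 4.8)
The claim $\det R_n = 0$ for $n$ odd is immediate from skew-symmetry, and $\det R_n = 1$ for $n$ even follows from Lemma 1.4 applied to the transitive tournament: its adjacency matrix $A$ is strictly upper triangular, so $2A+I$ is upper triangular with all diagonal entries $1$, giving $\det R_n = \det(2A+I) = 1$. (Alternatively this is already remarked after Lemma 1.4.) So the only real content is the formula for $R_n^{-1}$ when $n$ is even. The plan is to verify the proposed inverse directly: let $M$ denote the displayed matrix and check that $R_n M = I$.

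First I would record the two matrices explicitly by their entries. We have $(R_n)_{k\ell} = 1$ if $k < \ell$, $-1$ if $k > \ell$, $0$ if $k = \ell$; that is, $(R_n)_{k\ell} = \sgn(\ell - k)$. Reading off the displayed $M$, the pattern is that $M_{k\ell} = 0$ on the diagonal, and off the diagonal the entries alternate $\pm 1$ along rows and columns starting with $M_{k,k+1} = -1$; concretely $M_{k\ell} = (-1)^{\ell - k}\sgn(\ell-k)$ for the entries above and to the right, with the skew-symmetric continuation below — I would first confirm this closed form is consistent with the corner entries shown (e.g.\ $M_{1n} = -1$ forces $n$ even, matching the hypothesis) and with skew-symmetry $M_{\ell k} = -M_{k\ell}$. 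Then the entry $(R_n M)_{pq} = \sum_{r=1}^n \sgn(r-p)\, M_{rq}$ must be computed and shown to equal $\delta_{pq}$.

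The computation of $(R_n M)_{pq} = \sum_r \sgn(r-p) M_{rq}$ splits the sum at $r = p$ (and at $r = q$, where $M_{rq}$ changes sign behavior). For $r < p$ the coefficient $\sgn(r-p) = -1$; for $r > p$ it is $+1$; the term $r = p$ contributes $0$. Using the alternating structure of the column $(M_{rq})_r$, each of the resulting sums telescopes: a consecutive run of alternating $\pm 1$'s sums to $0$, $\pm 1$, depending only on the parity of its length and its endpoints. I expect that when $p = q$ all the boundary contributions cancel except one, leaving $1$, and when $p \neq q$ everything cancels to $0$ — this is the routine but slightly fiddly case analysis, and it is the main (modest) obstacle: one must track parities carefully and handle the cases $p < q$, $p > q$, and the boundary indices $p, q \in \{1, n\}$ separately, using evenness of $n$ at exactly the step where the full-length alternating sum appears. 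An alternative that might streamline this: observe $R_n = R_n^{(1)}$ where more generally one can write $R_n$ in terms of the nega-circulant $W_n$ introduced just before the lemma (indeed $R_n = \sum_{m=1}^{n-1}(-1)^{?} W_n^m$-type expression — I would check the exact signs), and then invert using $W_n^n = -I$ and a geometric-series manipulation; but the direct verification above is self-contained and I would present that unless the $W_n$ identity falls out cleanly. Once $R_n M = I$ is established, $M = R_n^{-1}$ follows, completing the proof.
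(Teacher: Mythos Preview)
Your treatment of $\det R_n$ matches the paper's exactly. For $R_n^{-1}$ your plan is correct but the paper takes the route you flag as an alternative: it observes that $R_n$ is a polynomial in the nega-circulant $W_n$, hence by Cayley--Hamilton so is $R_n^{-1}$; a polynomial in $W_n$ is determined by its first row, so it suffices to verify the single identity
\[
\begin{bmatrix} 0 & -1 & 1 & -1 & \cdots & 1 & -1 \end{bmatrix} R_n = \begin{bmatrix} 1 & 0 & \cdots & 0 \end{bmatrix},
\]
after which $R_n^{-1} = -W_n + W_n^2 - \cdots + (-1)^{n-1}W_n^{n-1}$ falls out from the known formula for the powers of $W_n$. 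Your direct verification of $R_n M = I$ via the closed form $M_{k\ell} = (-1)^{\ell-k}\sgn(\ell-k)$ is perfectly valid and more self-contained, but it costs you the full $(p,q)$ case analysis with parity tracking, whereas the paper's nega-circulant reduction collapses everything to a single row check and avoids that bookkeeping entirely.
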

\begin{proof}
As $R_n$ is skew-symmetric, if $n$ is odd then $\det R_n=0$.

Now assume $n$ is even. 
By \Cref{lem:skew-non}, $\det R_n=\det (I+2A_n)=1$
where $A_n$ is the $n\times n$ matrix with ones above the diagonal and zeros elsewhere.
Thus $R_n$ is invertible, which by the Cayley-Hamilton theorem 
implies that ${R_n}^{-1}$ is a polynomial in $R_n$. Furthermore, as $R_n$ is a polynomial in $W_n$, ${R_n}^{-1}$ is a polynomial in $W_n$.
Because  
    \[ 
        \left[ \begin{array}{rrrrrrr} 
            0& -1 & 1 & -1 &\cdots & 1 & -1 
                    \end{array} \right] R_n 
        = \left[ \begin{array}{ccccc}
                1 & 0 & \cdots & 0 \end{array} \right],
    \]
the first row of $R_n^{-1}$ is 
$\left[ \begin{array}{ccccccc} 
            0& -1 & 1 & -1 &\cdots & 1 & -1 
                    \end{array} \right]$.

As $R_n^{-1}$ is a polynomial in $W_n$,  $R_n^{-1}= -{W_n}^1+{W_n}^2 -{W_n}^3 +\cdots +(-1)^{n-1}W_{n}^{n-1}$. The result now follows from formula for the powers of $W_n$.
\end{proof}

By \Cref{lem: det rn is 1}, we know that for $n$ even, the smallest value of $\mathcal{D}(n)$ is $1$. The next result shows that all odd integers between 1 and $n^2/2 +1$ are in $\mathcal{D}(n)$ when $n$ is even. In particular, this answers \Cref{question} in the affirmative.

We use the following result about the Schur complement in the argument (see \cite[Statement 0.8.5]{HJ}).

\bigskip
\noindent
{\bf Schur complement}.
\\
{\it 
    Assume that  $A$ and $D$ are square matrices with $D$ invertible, and let $M$ be the block matrix
            \[
                M = \left[ \begin{array}{c|c} 
                                A & B\\ \hline
                                C & D 
                        \end{array} \right].
            \]
    The \emph{Schur complement of $D$} is the matrix $M/D:= A - BD^{-1}C$. 
    
    Furthermore
            \[
                \det(M) = \det(M/D)\det(D).
            \]
}
\bigskip
\noindent

\begin{theorem} \label{thm: quadratic}
For each even positive integer $n$ and each positive odd integer $k < n^2/2+1$ there is a Seidel tournament matrix of order $n+2$ whose determinant is $k^2$.
\end{theorem}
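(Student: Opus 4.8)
The plan is to realize the target matrix in block form with the transitive block $R_n$ in a corner, and to extract its determinant via the Schur complement formula stated above. Fix $C_2=\left[\begin{smallmatrix} 0 & 1\\ -1 & 0\end{smallmatrix}\right]$, and for vectors $b_1,b_2\in\{0,1,-1\}$-entry... more precisely $b_1,b_2\in\{1,-1\}^n$, set
\[
S=\left[\begin{array}{c|c} C_2 & B\\ \hline -B\trans & R_n\end{array}\right],
\qquad
B=\left[\begin{array}{c} b_1\trans\\ b_2\trans\end{array}\right].
\]
This is manifestly the Seidel matrix of a tournament of order $n+2$. Since $\det R_n=1$ by \Cref{lem: det rn is 1}, the Schur complement formula gives $\det S=\det\!\big(C_2+BR_n^{-1}B\trans\big)$. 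Because $R_n^{-1}$ is skew-symmetric, $BR_n^{-1}B\trans$ is a $2\times 2$ skew-symmetric matrix, so $C_2+BR_n^{-1}B\trans=\left[\begin{smallmatrix} 0 & a\\ -a & 0\end{smallmatrix}\right]$ with $a=1+b_1\trans R_n^{-1}b_2$, whence $\det S=a^2$. Thus it suffices to show that $a$ can be made equal to any prescribed odd $k$ with $1\le k<n^2/2+1$.

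Next I would compute the range of $b_1\trans R_n^{-1}b_2$. From the explicit form of $R_n^{-1}$ in \Cref{lem: det rn is 1} one reads off $(R_n^{-1})_{ij}=\sgn(j-i)\,(-1)^{|i-j|}$ for $i\neq j$. Substituting $x_i=(-1)^i(b_1)_i$ and $y_i=(-1)^i(b_2)_i$ — still arbitrary $\pm1$ vectors, since the $(-1)^i$ sign twist is a bijection of $\{1,-1\}^n$ — and using $(-1)^{|i-j|}=(-1)^i(-1)^j$, a short computation collapses the bilinear form to $b_1\trans R_n^{-1}b_2=\sum_{i<j}(x_iy_j-x_jy_i)$. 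Specializing $y=(1,1,\dots,1)$ reduces this to $\sum_{i=1}^{n}(n+1-2i)\,x_i$, a signed sum whose coefficient multiset is $\{\pm1,\pm3,\dots,\pm(n-1)\}$; equivalently we are forming $\sum_i\varepsilon_i d_i$ over the multiset of positive weights $\{1,1,3,3,\dots,n-1,n-1\}$, whose total is $2(1+3+\cdots+(n-1))=n^2/2$.

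The core of the argument is then the elementary claim that, writing $m=n/2$, the signed sums over the weights $\{1,1,3,3,\dots,2m-1,2m-1\}$ realize every even integer in $[-2m^2,2m^2]=[-n^2/2,\,n^2/2]$. This goes by induction on $m$: given that the first $j$ pairs realize all even values in $[-2j^2,2j^2]$, adjoining the first copy of $2j+1$ fills all odd values in $[-(2j^2+2j+1),\,2j^2+2j+1]$ (no gap, since $2j+1\le 2j^2+1$ for $j\ge1$), and adjoining the second copy then fills all even values in $[-2(j+1)^2,\,2(j+1)^2]$. Consequently $a=1+b_1\trans R_n^{-1}b_2$ attains every odd value in $[1-n^2/2,\,1+n^2/2]$; in particular, for any odd $k$ with $1\le k<n^2/2+1$ we may choose $b_1,b_2$ with $a=k$, giving a Seidel tournament matrix of order $n+2$ with determinant $k^2$. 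The only genuinely delicate step is packaging this inductive gap-filling claim cleanly; the block decomposition, the Schur complement reduction, and the sign substitution are routine once the explicit inverse $R_n^{-1}$ is available.
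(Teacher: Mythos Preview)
Your proof is correct and follows essentially the same route as the paper's own argument: the same $2\times 2$ block bordering of $R_n$, the same Schur complement reduction to $(1+b_1\trans R_n^{-1}b_2)^2$, and the same specialization of one of the two $\pm1$ vectors to the alternating vector (your choice $y=\mathbf{1}$ after the sign twist $y_i=(-1)^i(b_2)_i$ is exactly the paper's fixed $x=(1,-1,\dots,1,-1)$). The paper computes $x\trans R_n^{-1}$ directly from the explicit inverse to obtain the coefficient vector $(-(n-1),\,n-3,\,\dots,\,n-3,\,-(n-1))$ and then inducts on $n$ by bordering $y$; you arrive at the identical coefficient multiset $\{1,1,3,3,\dots,n-1,n-1\}$ via the sign substitution and induct on the number of weight pairs---these are the same induction in different clothing, and your gap-filling condition $2j+1\le 2j^2+1$ is exactly what makes the paper's bordering step work.
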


\begin{proof}

Let $x{\trans} = 
                \left[ \begin{array}{cccccc}
                    1 & -1 & 1 & \cdots & 1 & -1 \end{array} \right] $
be the $1\times n$ vector of alternating ones and negative ones, 
and let $y \in \{ \pm 1\}^n$.
The matrix
    \[
        S= \left[ \begin{array}{cc|c}
                 0 & 1 & x{\trans}\\ 
                -1 & 0 & y{\trans}\\ \hline
                 -x & -y & R_n \end{array} \right]. 
    \] 
is a Seidel matrix of a tournament of order $n+2$.

By the Schur complement of $S$ and \cref{lem: det rn is 1}, we have
\begin{eqnarray*}
\det S &= &\det(S/R_n)\det(R_n)\\
&= & \det \left( \left[ \begin{array}{cc}
0 &1 \\ -1 & 0 \end{array}
 \right] + \left[ \begin{array}{c} x\trans \\ y\trans 
 \end{array} \right] R_n^{-1} \left[ \begin{array}{cc} 
 x & y \end{array} \right] \right)  \\
 & = & 
 \det 
 \left(
 \left[
 \begin{array}{cc} 0 & 1 \\ -1 & 0 \end{array} 
 \right] + 
 \left[ \begin{array}{cc} 0 & x\trans R_n^{-1} y \\
 -x\trans R_n^{-1}y & 0 \end{array} \right] 
 \right)\\
 &= & (1+x\trans R_n^{-1}y)^2,
 \end{eqnarray*}
 with the penultimate equality coming from the fact that 
 $R_n^{-1}$ is skew-symmetric and therefore $y\trans R_n^{-1}x = (y\trans R_n^{-1}x)\trans = -x\trans R_n^{-1}y$. 
 
 By \Cref{lem: det rn is 1} we have
 \[ 
 x\trans R_n^{-1}=
 \left[ \begin{array}{rrrrrrrrrr}
 -(n-1) & n-3 & -(n-5) & \cdots & \pm 1 & \pm 1 & \cdots 
 & -(n-5) & n-3 & -(n-1) \end{array} \right] .
 \]
Now let 
     \[
            \mathcal{L}_n= \{ x\trans R_n^{-1}y \mid 
                    y \in \{\pm 1\}^n \mbox{ and } x\trans R_n^{-1} y\geq 0  \}.
    \] 
 We show that   
 $\mathcal{L}_n=\{0,2, \ldots, n^2/2\}$ for all even $n$ by induction.
For $n=2$, we have 
    \[
        x\trans R_2^{-1} = \left[ \begin{array}{cc} -1 & -1 \end{array} \right] \left[ \begin{array}{c} y_1 \\ y_2 
            \end{array} \right]
    \]
where $y_i \in \{\pm 1\}$. 
The only nonnegative values that can be achieved are $0$ and $2$, 
as desired.

Assume that the result holds for $n$, and consider $\mathcal{L}_{n+2}$.
Every element of $\mathcal{L}_{n+2}$ is twice the sum of the elements in a subset of $\{1,3,\ldots, n+1\}$, and hence is contained in $\{0,2, \ldots, 2((n+2)/2)^2=(n+2)^2/2\}$. For each $k \in \mathcal{L}_n$, let $y_k$ denote the vector satisfying $x\trans R_n^{-1}y_k = k$. Let $Y_k$ denote the vector obtained by bordering $y_k$ with $-1$ on its top and $1$ on the bottom, i.e., 
$Y_k = \left[ \begin{array}{r} -1 \\ y_k \\  1 \end{array} \right]$.
Then 
    \[
        x\trans R_{n+2}^{-1} Y_k = (n-1) + x\trans R_{n+2}^{-1} y_k + -(n-1) = k,
    \]
so $\mathcal{L}_n \subseteq \mathcal{L}_{n+2}$.

For even $k$ with $n^2/2 < k \leq (n+2)^2/2$, we have $k-2(n+1)\in \mathcal{L}_n$.  Let $Y'_{k-2n-2}$ denote the vector obtained by bordering $y_{k-2n-2}$ with $-1$ on the top and by $1$ on the bottom, i.e., $Y'_{k-2n-2} = \left[ \begin{array}{c} -1 \\ y_{k-2n-2}
\\ -1 \end{array} \right]$. Then 
    \[
        x\trans R_{n+2}^{-1} Y'_{k-2n-2} = (n-1) + x\trans R_n^{-1}y_{k-2n-2} + (n-1) = k,
    \]
so $k \in \mathcal{L}_{n+2}$.

Thus, for each $k \in \mathcal{L}(n)$ there is some choice of vector $y \in \{\pm 1\}^n$ so that $\det(M) = (1+k)^2$.
 \end{proof}

\section{Expected values associated with Seidel adjacency matrices}
\label{sec: average}
Let $m$ be a positive integer and $n=2m$.
Let $G$ be a graph on vertices $1,2,\ldots, 2m$.
A {\it matching} of $G$ is a collection of vertex disjoint edges of $G$.
If each vertex of $G$ is incident to an edge of a given matching, then the matching is {\it perfect}.
The set of all perfect matchings of $K_n$ is denoted by $\mathcal{M_n}$.
Given a perfect matching $M=\{ i_1j_1, \ldots, i_mj_m\}$ we may assume that 
the vertices in each edge are ordered so that $i_k<j_k$; and the edges are ordered
so that $i_1<i_2< \cdots < i_m$.  This ordering gives a permutation $i_1,j_1, \ldots, i_m, j_m$
of $1,2, \ldots, n$ that we denote by $\sigma_M$. 
Let $S=[s_{ij}]$ be an $n\times n$ skew-symmetric matrix.
The matrix $S$ determines a weight on $M$, namely, $\mbox{wt}_S(M)=s_{i_1,j_1}s_{i_2,j_2} \cdots s_{i_m,j_m}$.
The {\it Pfaffian of S} is denoted by $\mbox{Pf}\; A$ and is defined by 
\[ 
\mbox{Pf}\; S = \sum_{M\in \mathcal{M}} \mbox{sgn}(\sigma_M) \mbox{wt}_S(M).\]
It is well-known, see \cite{Cayley,HJ}, that 
\[ 
\det S = (\mbox{Pf}\; S)^2.
\]
In the remainder of this section we consider the set $\mathcal{S}_n$ of 
Seidel matrices for tournaments of size $n$ endowed with the uniform distribution; that is, each matrix in $\mathcal{S}_n$ occurs with probability $\frac{1}{2^{n \choose 2}}$.  For $i<j$, we let $k_{ij}$ be a variable that takes on either the value $1$ or value $-1$ each with probability $1/2$.  Thus the skew-symmetric matrix $K=[k_{ij}]$
where $k_{ii}=0$ and $k_{ji}=-k_{ij}$ for $i<j$, represents a random 
matrix in $\mathcal{S}_n$.  We will consider various random variables on $\mathcal{S}_n$: the determinant and the square of the determinant.  Additionally, for each multiset $\alpha$ of 
$\{(i,j): i<j\}$ we will consider the random variable
\[ r_{\alpha}= \prod_{(i,j) \in \alpha} {k_{ij}}^{m_{ij}},\] 
where $m_{ij}$ is number of times $(i,j)$ occurs in the multiset $\alpha$.

Note that 
\begin{equation}
\label{eq:expected}
\mathbb{E}(r_{\alpha}) = 
\left\{  \begin{array}{cl}
0 & \mbox{ if $(i,j)$ occurs in $\alpha$ with odd multiplicity, and }
\\
1 & \mbox{otherwise.}
\end{array} \right.
\end{equation}

If $(M_1,M_2, \ldots, M_k) \in \mathcal{M}^k$, then we set 
$\alpha_{(M_1,M_2,\ldots, M_k)}$ to be the multigraph whose edges are those in the union of $M_1, M_2, \ldots, M_k$ (see \Cref{fig: multigraphs}).
Note that 
\[ 
\mathbb{E}(\mbox{wt}_K(M_1) \mbox{wt}_K(M_2) \cdots \mbox{wt}_K(M_k))=\mathbb{E}(r_{\alpha_{(M_1,M_2,\ldots, M_k)}}),
\] 
and that (\ref{eq:expected}) implies  $\mathbb{E}(r_{\alpha_{(M_1,M_2,\ldots, M_k)}})$
is $1$ if and only if 
each edge in the union of $M_1$, \ldots, $M_k$ occurs an even number of times, and is zero otherwise. 

\begin{figure}
    \centering
    \includegraphics[width = \textwidth]{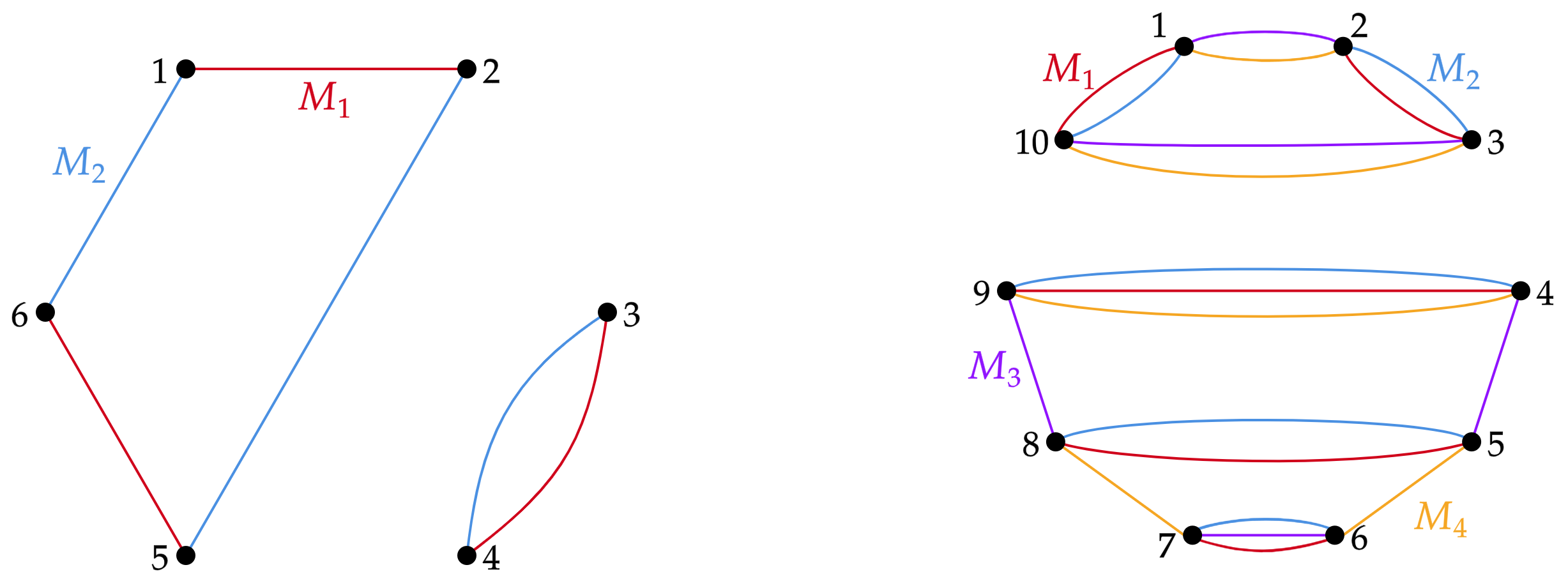}
    \caption{Examples of multigraphs formed as a union of perfect matchings.}
    \label{fig: multigraphs}
\end{figure}

We use this to calculate the expected value of $\det S$ over all Seidel matrices $S$ of tournaments of order $n$.  As is customary, $n!!$ is defined by $0!!=1!!=1$, 
and $n!!=n \cdot (n-2)!!$ for each integer $n\geq 2$.

\begin{theorem}\label{thm:expected}
Let $n=2m$ be an even positive integer. Then the expected value of $\det X$ over all Seidel matrices of tournaments of order $n$ is $\mathbb{E}(\det X) = (n-1)!!$.  
\end{theorem}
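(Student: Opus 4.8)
The plan is to expand $\det X = (\mathrm{Pf}\, X)^2$ directly in terms of perfect matchings and take expectations term by term. Writing $\mathrm{Pf}\, X = \sum_{M \in \mathcal{M}_n} \mathrm{sgn}(\sigma_M)\,\mathrm{wt}_X(M)$, we get
\[
\det X = \sum_{M_1, M_2 \in \mathcal{M}_n} \mathrm{sgn}(\sigma_{M_1})\mathrm{sgn}(\sigma_{M_2})\,\mathrm{wt}_X(M_1)\mathrm{wt}_X(M_2).
\]
Taking $\mathbb{E}$ and using the observation recorded just before the theorem, $\mathbb{E}(\mathrm{wt}_X(M_1)\mathrm{wt}_X(M_2)) = \mathbb{E}(r_{\alpha_{(M_1,M_2)}})$, which equals $1$ if every edge of the multigraph $M_1 \cup M_2$ has even multiplicity and $0$ otherwise. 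Since $M_1$ and $M_2$ are each simple perfect matchings, every edge in their union has multiplicity $1$ or $2$; the union has all multiplicities even precisely when $M_1 = M_2$. Hence every cross term with $M_1 \neq M_2$ vanishes in expectation, and
\[
\mathbb{E}(\det X) = \sum_{M \in \mathcal{M}_n} \mathrm{sgn}(\sigma_M)^2\,\mathbb{E}(\mathrm{wt}_X(M)^2) = \sum_{M \in \mathcal{M}_n} 1 = |\mathcal{M}_n|,
\]
because $\mathrm{sgn}(\sigma_M)^2 = 1$ and each $s_{ij}^2 = 1$ so $\mathrm{wt}_X(M)^2 = 1$ identically.

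It remains to identify $|\mathcal{M}_n|$ with $(n-1)!!$. This is the standard count of perfect matchings of $K_n$: pair vertex $1$ with any of the remaining $n-1$ vertices, then recurse on the remaining $n-2$ vertices, giving $|\mathcal{M}_{2m}| = (2m-1)(2m-3)\cdots 3 \cdot 1 = (n-1)!!$, consistent with the convention $0!! = 1!! = 1$ fixed above the theorem. I would state this as a one-line induction or simply cite it as well known.

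The only place requiring genuine care — and thus the main (modest) obstacle — is the vanishing of the cross terms: one must be sure that the event "$M_1 \cup M_2$ has all even edge-multiplicities" really forces $M_1 = M_2$ rather than merely constraining the two matchings. This follows because each matching contributes each of its edges exactly once to the union, so an edge has even multiplicity in $M_1 \cup M_2$ iff it lies in both or in neither; if all multiplicities are even then $M_1$ and $M_2$ have exactly the same edge set. I would spell this out explicitly, since it is the crux of why the second moment of $\mathrm{Pf}$ collapses to a simple count, and then the computation of $\mathbb{E}(\det X)$ follows immediately from equation~(\ref{eq:expected}) and linearity of expectation.
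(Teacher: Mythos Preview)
Your proposal is correct and follows essentially the same approach as the paper: expand $(\mathrm{Pf}\,X)^2$ as a double sum over perfect matchings, observe via equation~(\ref{eq:expected}) that only the diagonal terms $M_1=M_2$ survive in expectation, and identify the resulting count $|\mathcal{M}_n|$ with $(n-1)!!$. The paper phrases the vanishing of cross terms in terms of the component structure of $\alpha_{(M,N)}$ (duplicate edges versus even cycles of length $\geq 4$), while you argue directly from edge multiplicities, but this is the same observation.
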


\begin{proof}
Observe that 
\begin{eqnarray}
\mathbb{E}(\det X: X\in \mathcal{S}_n) &= & 
 \frac{1}{2^{n \choose 2}} \sum_{ S \in \mathcal{S}_n} \det S
\\
& = & \frac{1}{2^{n \choose 2}} \sum_{S \in \mathcal{S}_n}  (\mbox{Pf}\,S)^2
\\
 &= &  \frac{1}{2^{n \choose 2}} \sum_{S\in \mathcal{S}_n}\sum_{M,N \in \mathcal{M}} \mbox{sgn}(\sigma_M)\mbox{sgn}(\sigma_N) \mbox{wt}_S(M) \mbox{wt}_{S}(N) \\
 & = & \sum_{M,N \in \mathcal{M}}  \mbox{sgn}(\sigma_M)\mbox{sgn}(
 \sigma_N) 
 \mathbb{E} (
  \mbox{wt}_X(M) \mbox{wt}_{X}(N)). \label{eq:sum}
 \end{eqnarray}
The union of the edges of the perfect matchings $M$ and $N$ form a multigraph $\alpha_{M, N}$  each of whose connected components is either a 
pair of  duplicate edges,  or form an even-cycle of length at least $4$.  By (\ref{eq:expected}),  $\mathbb{E}(\mbox{wt}_X(M) 
\mbox{wt}_X(N))$ is $1$ in the former case, and is $0$ in the latter case.  Hence, the sum in (\ref{eq:sum}) equals the number of perfect matchings of $K_n$. This is exactly $(n-1)!!$.
\end{proof}

Note this theorem implies that for each even $n$, $\mathcal{D}(n)$ 
contains a value that is at least $\sqrt{ (n-1)!!}$.  This bound grows super-exponentially 
in $n$. 

We next give a recursive formula for the expected value of $(\det S)^2$.

\begin{theorem}
\label{thm:square}
Let $n$ be an even integer. 
Let $z_n=\mathbb{E}( (\det X)^2)$ on $\mathcal{S}_n$.
Then $z_n=y_n \cdot (n-1)!!$, 
where $y_n$ is given by the recurrence
\begin{eqnarray*}
y_0&= & 1\\
y_2 & =& 1\\
y_n&=& (n-1)y_{n-2} + (2n-4)y_{n-4} \mbox{ for $n\geq 4$.}
\end{eqnarray*}
\end{theorem}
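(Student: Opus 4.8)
The quantity $z_n = \mathbb{E}((\det X)^2)$ expands, via the Pfaffian-squared identity twice, as
\[
z_n = \frac{1}{2^{\binom n2}}\sum_{S\in\mathcal{S}_n}(\operatorname{Pf} S)^4
= \sum_{M_1,M_2,M_3,M_4\in\mathcal{M}}\operatorname{sgn}(\sigma_{M_1})\operatorname{sgn}(\sigma_{M_2})\operatorname{sgn}(\sigma_{M_3})\operatorname{sgn}(\sigma_{M_4})\,\mathbb{E}\big(\operatorname{wt}_X(M_1)\cdots\operatorname{wt}_X(M_4)\big),
\]
and by \eqref{eq:expected} the expectation is $1$ exactly when every edge of the multigraph $\alpha_{(M_1,M_2,M_3,M_4)}$ occurs with even multiplicity, and $0$ otherwise. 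So $z_n$ counts, with a sign, the number of ordered $4$-tuples of perfect matchings of $K_n$ whose edge-union multigraph is everywhere even. First I would argue that for such a tuple the sign is always $+1$: a union of two perfect matchings is a disjoint union of double edges and even cycles, and the "everywhere even" condition on the union of all four forces the multigraph $M_1\cup M_2$ to agree with $M_3\cup M_4$ as a set of cycles; tracking how $\operatorname{sgn}(\sigma_{M_i})$ behaves under the cycle structure (as already used implicitly in the proof of \Cref{thm:expected} for the $k=2$ case) shows the four signs multiply to $1$. Thus $z_n$ is simply the number $N_n$ of such tuples, and the theorem reduces to proving that $N_n = (n-1)!!\cdot y_n$ with the stated recurrence.

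**Setting up the recursion.** To get a recurrence I would condition on what happens at vertex $n$ (or at the vertex with the largest label). In each of the four matchings $M_i$, vertex $n$ is matched to some partner $p_i$. The evenness condition on the edge-union forces the multiset $\{p_1,p_2,p_3,p_4\}$ to be even, i.e., either all four partners are equal, or they split into two equal pairs. I would enumerate these cases:
\begin{itemize}
\item[(i)] $p_1=p_2=p_3=p_4 = v$ for some $v\neq n$: deleting vertices $n$ and $v$ leaves an everywhere-even $4$-tuple on $K_{n-2}$, giving $(n-1)N_{n-2}$ such configurations (the factor $n-1$ for the choice of $v$);
\item[(ii)] the partners split $2{+}2$, say $\{p_1,p_2\}=\{a\}$ and $\{p_3,p_4\}=\{b\}$ with $a\neq b$ (and the analogous pairings $M_1M_3$ / $M_2M_4$, $M_1M_4$ / $M_2M_3$): here edge $na$ appears twice from $M_1,M_2$ and edge $nb$ twice from $M_3,M_4$, so these contribute no further parity constraint at $n$, $a$, $b$ beyond what is inherited; after removing $n$ from all four matchings one is left with near-perfect matchings of $K_{n-1}$, and a short analysis of the residual parity condition on vertices $a,b$ should collapse this to configurations counted by $N_{n-4}$, with combinatorial multiplicity accounting for the $2n-4$ coefficient.
\end{itemize}
The precise bookkeeping that turns case (ii) into exactly $(2n-4)N_{n-4}$ is where the real work lies. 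Converting the recurrence $N_n = (n-1)N_{n-2} + (2n-4)N_{n-4}$ for $N_n$ into the stated recurrence for $y_n = N_n/(n-1)!!$ is then routine: dividing by $(n-1)!!$ and using $(n-1)!! = (n-1)(n-3)!!$ gives $y_n = (n-1)y_{n-2} + \frac{2n-4}{(n-1)(n-3)}y_{n-4}$, so I would want the coefficient in the $N$-recurrence to actually be $(2n-4)(n-1)(n-3)/\big((n-3)\cdot\text{something}\big)$ — i.e. I should double-check the exact power of double factorials floating around and absorb it, possibly the $N$-recurrence coefficient is $(2n-4)(n-1)(n-3)$ rather than $(2n-4)$. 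The base cases $y_0=y_2=1$ correspond to $N_0=1$ (empty tuple) and $N_2=1$ (the unique matching repeated four times), which are immediate.

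**Main obstacle.** The crux is case (ii): correctly analyzing the $2{+}2$ split of partners of vertex $n$ and showing the residual structure on the remaining $n-1$ vertices is in parity-preserving bijection with everywhere-even $4$-tuples on a $K_{n-4}$ (with the right multiplicity), rather than on $K_{n-2}$ or something that would not close the recursion. The subtlety is that removing one matched edge from each of four matchings need not leave perfect matchings, so one must carefully describe the space of "defect" configurations and how the three choices of which pairs of matchings share a partner, together with the choices of $a$ and $b$, assemble into the coefficient $2n-4$ (after normalizing by double factorials). I would organize this by a careful case split on whether $a$ and $b$ are themselves matched to each other in some of the $M_i$, drawing the small multigraph pictures (as in \Cref{fig: multigraphs}) to keep the cycle structure straight, and then verify the final arithmetic of the double factorials against the small values $y_4 = 3\cdot1 + 4\cdot 1 = 7$, $y_6 = 5\cdot 7 + 8\cdot 1 = 43$, which one can independently cross-check against a direct count of everywhere-even $4$-tuples of perfect matchings of $K_4$ and $K_6$.
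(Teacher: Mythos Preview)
Your overall framework matches the paper's: expand $(\operatorname{Pf} X)^4$, use \eqref{eq:expected} to reduce $z_n$ to a count of ordered $4$-tuples of perfect matchings whose union multigraph is everywhere even, argue the sign is always $+1$, and then derive a recurrence by looking at vertex $n$. But the recurrence you propose for $N_n$ is wrong, and you already sensed this when the double-factorial normalisation refused to produce $(2n-4)$.

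The gap is in your case (ii). A $2{+}2$ split of the partners of $n$ does \emph{not} uniformly reduce to configurations on $n-4$ vertices. The correct structural observation (which the paper makes) is that the connected components of the everywhere-even multigraph $\alpha_{(M_1,M_2,M_3,M_4)}$ are exactly: edges of multiplicity $4$, and doubled even cycles of length $\geq 4$. Your case (i) is the multiplicity-$4$ edge at $n$, contributing $(n-1)z_{n-2}$. Your case (ii) is the situation where $n$ lies on a doubled cycle, and this must be subdivided by the length of that cycle. If the cycle has length exactly $4$, deleting all four of its vertices reduces to $n-4$ vertices, contributing $3(n-1)(n-2)(n-3)z_{n-4}$ (the $3$ comes from the $\binom{4}{2}/2$ ways to label the doubled $4$-cycle). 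If the cycle has length $\geq 6$, the right move is to delete $n$ and one of its two neighbours and splice the cycle back together; this gives a bijection with elements of $\mathcal{L}_{n-2}$ in which the surviving neighbour is \emph{not} on a multiplicity-$4$ edge, contributing $(n-1)(n-2)\bigl(z_{n-2}-(n-3)z_{n-4}\bigr)$. Summing the three cases yields
\[
z_n=(n-1)^2 z_{n-2}+2(n-1)(n-2)(n-3)z_{n-4},
\]
and \emph{this} is the recurrence that, upon dividing by $(n-1)!!$, collapses to $y_n=(n-1)y_{n-2}+(2n-4)y_{n-4}$. The coefficient of $z_{n-2}$ is $(n-1)^2$, not $(n-1)$; the extra factor of $(n-1)$ that you were missing comes precisely from the long-cycle subcase of your case (ii), which feeds back into $z_{n-2}$ rather than $z_{n-4}$.

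A smaller issue: your sign argument contains a false claim. The everywhere-even condition does not force $M_1\cup M_2$ and $M_3\cup M_4$ to agree as cycle sets (e.g.\ take $M_1=M_2=\{12,34\}$, $M_3=M_4=\{14,23\}$). The paper's correct argument works component-by-component: on each doubled cycle, the two matchings containing a given edge must agree on the entire cycle, so on each component the four matchings come in two equal pairs, and the product of signs is a square.
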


\begin{proof}
Let $\mathcal{L}_n$ denote the set of $4$-regular multigraphs on vertices $1,2,\ldots, n$ whose edges are labeled by $\{1,2,3,4\}$ 
such that each edge has even multiplicity, and edges of the same labels are not incident.  We claim that $\mathbb{E}((\det S)^2)$
is equal to the number of elements of $\mathcal{L}_n$.

Note that 
\begin{eqnarray}
\label{pfaff4}
\mathbb{E}((\det X)^2)&=& 
\mathbb{E}((\mbox{Pf}\,X)^4)\\
&=& \frac{1}{2^{n \choose 2}}
\sum_{S \in \mathcal{S}_n}\sum_{(M_1,M_2,M_3,M_4)\in \mathcal{M}^4} 
\prod_{i=1}^4 
\mbox{sgn}\;\sigma_{M_i} \cdot\mbox{wt}_S(M_i)
\\
& = & \sum_{(M_1,M_2,M_3,M_4) \in \mathcal{M}^4} \prod_{i=1}^4 \mbox{sgn}\; \sigma_{M_i} \mathbb{E}\left(\prod_{j=1}^4 \mbox{wt}_X(M_j)\right)
\end{eqnarray}
By (\ref{eq:expected}),  $\mathbb{E}(\prod_{j=1}^4 \mbox{wt}_X(M_j))$
is $1$ if the union of the $M_i$ forms a multigraph $\alpha_{(M_1,M_2, M_3, M_4)}$ in which each edge has even multiplicity and is $0$ otherwise. Thus, we may restrict our attention to the former case.

Note that each vertex of $\alpha_{(M_1, M_2, M_3,M_4)}$ has degree $4$.
Hence each vertex is incident to an edge of multiplicity 4, or 
two edges each of multiplicity two. It follows that the  connected components of $\alpha_{(M_1, M_2, M_3,M_4)}$  consist of even cycles of length at least 4 where each edge has multiplicity 2, or 
edges each of multiplicity $4$.  Note that this requires that
for each edge $e$ of multiplicity 4, $e$ is in each $M_i$; 
and for each edge $e$ of multiplicity $2$, $e$ is in two of the $M_i$ and these have the same set of edges in the cycle of 
$\alpha_{(M_1, M_2, M_3,M_4)}$ that contains $e$.  This implies that 
$\prod_{i=1}^4 \mbox{sgn}\; \sigma_{M_i}=1$.

Now label each edge of $M_i$ by $i$.  This gives a labelling of 
the edges of  the multigraph $\alpha_{(M_1, M_2, M_3,M_4)}$ by
elements in $\{1,2,3,4\}$ in such a way that no edges with the same label are incident to each other.  For an edge of multiplicity $4$,
we have edges of each label. For an edge $e$ of multiplicity $2$, there are two labels for this edge, and the other edges incident to  a given vertex of $e$ are labelled by the the other two values in $\{1,2,3,4\}$.  Thus, this labelling of 
$\alpha_{(M_1, M_2, M_3,M_4)}$ is in $\mathcal{L}_n$.  Conversely, given 
an element of $\mathcal{L}n$ the edges of color $i$ form a perfect matching $M_i$ of $K_n$ $(i=1,2,3,4)$ and the union of the edges of the $M_i$ give a multigraph each of whose edges has even multiplicity.  Hence $\mathbb{E}((\det S)^2)=|\mathcal{L}_n|$.

We next establish  recurrence for $z_n:=|\mathcal{L}_n|$.
There are three possibilities for elements $L$ of $\mathcal{L}_n$, 
and we give counts for each of the possibilities. Refer to \Cref{fig: expected_cases}.

\bigskip\noindent
{\bf Case 1.}  $n$ is incident to an edge in $L$ of multiplicity $4$.

\noindent
In this case, $L$ consists of a edge $n$--$j$ of multiplicity $4$
and an element $L'$ of $\mathcal{L}_{n-2}$ on the vertices $\{1, \ldots, n\} \setminus \{n,j\}$.  There are $(n-1)$ choices for $j$, and $z_{n-2}$ choices for $L'$.
Hence this case has a total of
\begin{equation}
\label{eq:count1}
(n-1)z_{n-2}
\end{equation}
possibilities for $L$.

\bigskip\noindent
{\bf Case 2.} $n$ is incident to a double $4$-cycle in $L$.

\noindent
In this case, $L$ consists of a double $4$-cycle containing $n$,
and an element of $\mathcal{L}_{n-4}$ on the vertices not in the double $4$-cycle.  There are $(n-1)(n-2)(n-3)/2$  ways to chose and order the 
vertices (other than $n$) for the double 4-cycle and $\binom{4}{2} = 6$ ways to label the double $4$-cycle.  Thus, the number of $L$ in Case 2 
equals 
\begin{equation}
\label{eq:count2}
3(n-1)(n-2)(n-3)z_{n-4}.
\end{equation}

\bigskip\noindent 
{\bf Case 3.} $n$ is on a double $\ell$-cycle in $L$ with $\ell \geq 6$. 

Let $i$ and $j$ be the neighbors of $n$ in $L$ with $i>j$
and let $k$ be the other neighbor of $j$ in $L$.   Deleting the vertices $n$ and $j$ (and the edges incident to them)
from $L$ and inserting a double edge between $i$ and $k$ with labels the same as the labels in $L$ on the edge $i$--$n$
gives a bijection between the $L$ satisfying Case 3, and the 
elements of $\mathcal{L}_{n-2}$ (on $\{1,2, \ldots, n\} \setminus
\{n,j\}$ for which $i$ is not on an edge of multiplicity $4$.

The number of elements in $\mathcal{L}_{n-2}$ having $i$ on an edge of multiplicity $4$ is $(n-3)z_{n-4}$.  Hence, there are 
$(z_{n-2} -(n-3)z_{n-4})$ labelled multigraphs $L$ that satisfy 
Case 3 for the given $i$ and $j$.  As there there $(n-1)(n-2)$
choices for $i$ and $j$, the total number $L$ from Case 3 is 
\begin{equation}
\label{eq:count3}
(n-1)(n-2)(z_{n-2}-(n-3)z_{n-4}).
\end{equation}

Putting together Cases 1-3, (\ref{eq:count1})--(\ref{eq:count3})
imply that 
\begin{eqnarray*} 
z_n
&= &(n-1)z_{n-2} +3(n-1)(n-2)(n-3)z_{n-4} + 
(n-1)(n-2)(z_{n-2} -(n-3)z_{n-4})\\
&= & (n-1)^2z_{n-2}  + 2(n-1)(n-2)(n-3) z_{n-4})
\end{eqnarray*}
Now set $y_n=\frac{z_n}{(n-1)!!}$.
Then the last equation simplifies to 
\[y_n= (n-1)y_{n-2} +(2n-4)y_{n-4}.\]
\end{proof}

\begin{figure}
    \centering
    \includegraphics[width = .8\textwidth]{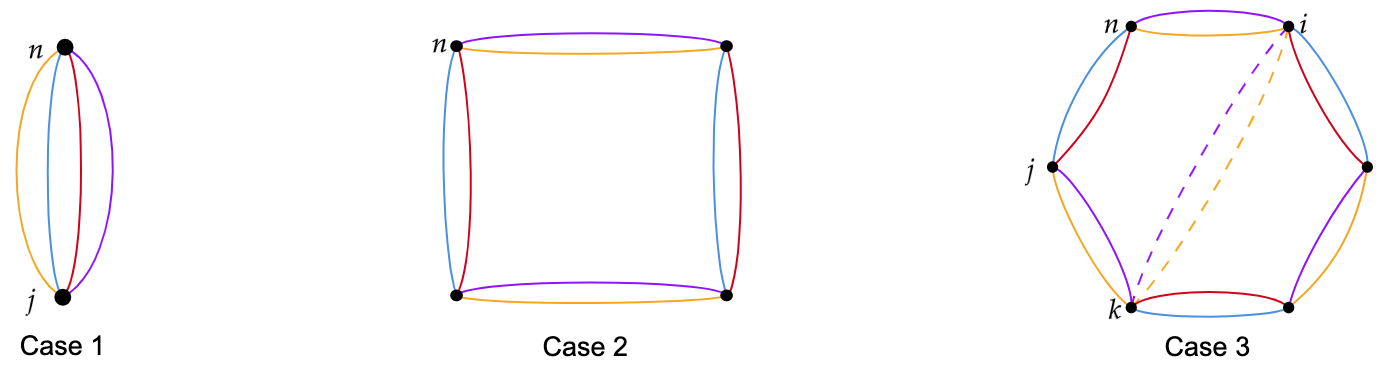}
    \caption{Types of components containing vertex $n$ as in the proof of \Cref{thm:square}.}
    \label{fig: expected_cases}
\end{figure}

Recall that the {\it variance} of a random variable $f$
on a probability distribution is defined by $\mathbb{V}(f)= 
\mathbb{E}(f^2)-(\mathbb{E}(f))^2$.

\begin{cor}
    Let $n$ be an even integer, and $y_n$ defined as in {\rm \cref{thm:square}}.
Then the variance of the random variable $f= \det S$
over the uniform distribution of $\mathcal{S}_n$ 
is given by 
\[ 
\mathbb{V}(f)= (n-1)!!(y_n - (n-1)!!). \]
\end{cor}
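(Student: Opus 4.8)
The plan is to apply the definition of variance directly, substituting the two expected-value computations already established. By definition, $\mathbb{V}(f) = \mathbb{E}(f^2) - (\mathbb{E}(f))^2$ where $f = \det S$ on the uniform distribution of $\mathcal{S}_n$. From \Cref{thm:square} we have $\mathbb{E}(f^2) = z_n = y_n \cdot (n-1)!!$, and from \Cref{thm:expected} we have $\mathbb{E}(f) = (n-1)!!$, so $(\mathbb{E}(f))^2 = ((n-1)!!)^2$. Substituting,
\[
\mathbb{V}(f) = y_n (n-1)!! - ((n-1)!!)^2 = (n-1)!!\bigl(y_n - (n-1)!!\bigr),
\]
which is the claimed formula.

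There is essentially no obstacle here: the corollary is a one-line consequence of the two preceding theorems together with the stated definition of variance, and the only "work" is the trivial factoring out of $(n-1)!!$. The substantive content lives entirely in \Cref{thm:expected} and \Cref{thm:square}, which the corollary simply combines.

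\begin{proof}
By definition, $\mathbb{V}(f) = \mathbb{E}(f^2) - (\mathbb{E}(f))^2$. By \Cref{thm:square}, $\mathbb{E}(f^2) = z_n = y_n(n-1)!!$, and by \Cref{thm:expected}, $\mathbb{E}(f) = (n-1)!!$. Therefore
\[
\mathbb{V}(f) = y_n(n-1)!! - ((n-1)!!)^2 = (n-1)!!\bigl(y_n - (n-1)!!\bigr). \qedhere
\]
\end{proof}
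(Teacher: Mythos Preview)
Your proof is correct and is essentially identical to the paper's own proof, which simply cites the definition of variance together with \Cref{thm:expected} and \Cref{thm:square}.
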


\begin{proof}
This follows from definition of variance,
\cref{thm:expected} and \cref{thm:square}.
\end{proof}

The table below gives various the first $7$ values of $y_n$ and $z_n$.  The values of $z_2$, $z_4$, $z_6$ and $z_8$ have been confirmed by direct computation of the average sum of the squares of the determinants of Seidel matrices. 
\begin{center}
\begin{tabular}{r|r|r}
$n$ & $y_n$ & $z_n$\\ \hline
2& 1& 1\\ 
4& 7& 21\\ 
6& 43& 645\\ 
8& 385& 40425\\ 
10& 4153& 3924585\\ 
12& 53383& 554916285\\ 
14& 793651& 107250027885
\end{tabular}
\end{center}

\section{Maximum value of $\mathcal{D}(n)$}
\label{sec: large}
Let $M(n) = \max\; \mathcal{D}(n)$. 
For $n$ even, \cref{thm: quadratic} 
demonstrates constructively that  $M(n)\geq (n^2/2+1)^2$. \cref{thm:expected} gives a non-constructive proof 
that $M(n)$ grows super-exponentially in $n$. 
 We can also bound $M(n)$ from above. Let $S$ be an $n\times n$ Seidel tournament matrix with $n$ even. Then each column of $S$ has Euclidean length $\sqrt{n-1}$, and Hadamard's inequality (see 2.1.P23 of \cite{HJ}) implies that $\det S\leq (n-1)^{n/2}$. Moreover, as equality occurs in Hadamard's inequality if and only if $S$ has mutually orthogonal rows, $\det  S = (n-1)^{n/2}$ if and only if $S$ is a skew-conference matrix. Hence we have the following. 

\begin{theorem}\label{th:max}
    For each integer $n$,  
        \[
            M(n) \leq (n-1)^{n/4}.
        \]
    Moreover, equality holds if and only if there is a skew-conference matrix of order $n$.
\end{theorem}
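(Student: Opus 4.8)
The plan is to obtain both assertions directly from Hadamard's inequality, which the discussion preceding the theorem has already set up. First I would unwind the definition of $M(n)$: the set $\mathcal{D}(n)$ is finite and consists of nonnegative real numbers, so the maximum is attained and $M(n)^2 = \max\{\det S : S \in \mathcal{S}_n\}$. If $n$ is odd then $\det S = 0$ for every such $S$, so $M(n) = 0$ and the bound $M(n) \le (n-1)^{n/4}$ is immediate; assume henceforth that $n$ is even. Every column of an $n \times n$ Seidel matrix $S$ has one zero entry and $n-1$ entries in $\{1,-1\}$, hence Euclidean norm $\sqrt{n-1}$. Hadamard's inequality (see 2.1.P23 of \cite{HJ}) therefore gives $|\det S| \le (n-1)^{n/2}$, and since $\det S \ge 0$ for any real skew-symmetric matrix (its eigenvalues are purely imaginary and occur in conjugate pairs, as recorded in the introduction) this becomes $\det S \le (n-1)^{n/2}$. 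Maximizing over $S$ and taking square roots yields $M(n) \le (n-1)^{n/4}$.

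For the ``moreover'' clause I would argue that $M(n) = (n-1)^{n/4}$ holds precisely when some order-$n$ Seidel matrix $S$ satisfies $\det S = (n-1)^{n/2}$, i.e. attains equality in Hadamard's inequality. The equality condition for Hadamard's inequality is that the columns of $S$ be mutually orthogonal; combined with the fact that each column has squared norm $n-1$, this is equivalent to $S\trans S = (n-1)I$, and hence (transposing and using $S\trans = -S$) to $S S\trans = (n-1)I$. Since $S$ is by construction a skew-symmetric matrix with zero diagonal and off-diagonal entries in $\{1,-1\}$, this last identity is exactly the definition of a skew-conference matrix of order $n$. Therefore $M(n) = (n-1)^{n/4}$ if and only if there exists a skew-conference matrix of order $n$, which completes the proof.

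I do not expect any genuine obstacle here: the entire content is Hadamard's inequality together with its classical equality case. The only points that deserve a line of care are the passage from $|\det S|$ to $\det S$, handled by skew-symmetry, and the degenerate odd-order situation, where the stated equivalence still holds harmlessly --- for odd $n \ge 3$ no skew-conference matrix exists and indeed $M(n) = 0 < (n-1)^{n/4}$, while $n=1$ reduces to $0 = 0$. It is worth adding a remark that this theorem is the point where skew-conference matrices, and hence the Hadamard conjecture, enter the picture: such matrices can exist only for $n$ in restricted residue classes, and whether one always exists there is a well-known open problem, so the bound is provably attained only for those $n$ admitting a skew-conference matrix (e.g. $n \in \{2,4,8,12\}$, consistent with the data in \cref{fig: determinant sets}).
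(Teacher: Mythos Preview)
Your proposal is correct and follows essentially the same route as the paper: the paper's argument (given in the paragraph immediately preceding the theorem) likewise applies Hadamard's inequality to the columns of $S$, each of length $\sqrt{n-1}$, and invokes the equality case to characterize when the bound is met. You are slightly more explicit about the odd-$n$ case and the passage from $|\det S|$ to $\det S$, but the substance is identical.
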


A tournament $T$ is \textit{doubly-regular} provided the number of vertices in $T$ dominated by two vertices $i$ and $j$ is independent of the choice of vertices $i\neq j$.  Necessarily, if $T$ is doubly-regular with more than 3 vertices, then $n\equiv 3 \bmod 4$. Given a doubly-regular tournament $T$ of order $n-1$, we can produce an $n\times n$  matrix by ``bordering" the Seidel matrix $S$ of $T$ as follows: take 
    \[
        S' = \left[\begin{array}{c|c}
                0 & j \\
                \hline
                -j\trans & S
            \end{array}\right],
    \]
where $j$ denotes the $1\times (n-1)$ vector of all ones. It is not difficult to show that $S'$ is a skew-conference matrix
(see (\cite{Reid-Brown}). 

When $n\equiv 3 \bmod 4$ is a power of a prime, the \textit{quadratic residue tournament}, $Q_n$, whose vertices are the elements of the field $\mbox{\rm GF}(n)$ and $ab$ is an arc if and only if $b-a$ is a square in the field, is an example of a doubly-regular tournament. Thus $M(n) = (n-1)^{n/4}$ infinitely often. It is conjectured that for all $n \equiv 3 \bmod 4$, there exists a doubly-regular tournament of order $n$ (see e.g., \cite{Reid-Brown}). 

We use existence of skew-conference matrices and the following number-theoretic result to show constructively that, eventually, the growth of $M(n)$ is super-exponential. 

\begin{theorem}[Special case of Theorem 3, \cite{BHP}] \label{primes} \
    For $\epsilon>0$ and $x$ sufficiently large with respect to $\epsilon$, the interval $[x-x^{0.55+\epsilon},x]$ contains primes congruent to $3$ modulo $4$.
\end{theorem}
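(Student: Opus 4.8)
The plan is not to reprove anything but to read the statement off from the Baker--Harman--Pintz circle of results as a black box. Their work shows that for all sufficiently large $x$ the interval $[x-x^{0.525},x]$ contains a prime, and refinements of their sieve argument -- of which the cited Theorem~3 is one -- count primes in such an interval subject in addition to a congruence condition modulo a bounded modulus. I would apply such a statement with modulus $q=4$ and residue $a=3$ (noting $\gcd(3,4)=1$), and with interval exponent $0.55+\epsilon$ in place of the near-optimal $0.525$; the slack in the exponent is there precisely so that any loss incurred by tracking the progression, or by passing from a von Mangoldt-weighted count to an honest count of primes, is absorbed harmlessly. Since $q=4$ is an absolute constant, whatever uniformity hypothesis such a theorem carries (a bound $q\le x^{\eta}$, say) is met trivially, and the conclusion is exactly that $\pi(x;4,3)-\pi\bigl(x-x^{0.55+\epsilon};4,3\bigr)>0$ once $x$ is large in terms of $\epsilon$.

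The one genuine point to get right -- and the reason the reference is to a progression-aware refinement such as Theorem~3 of \cite{BHP} rather than to the headline ``a prime in $[x-x^{0.525},x]$'' statement -- is that the plain short-interval result does \emph{not} determine the residue class. Indeed, Brun--Titchmarsh bounds the number of primes $p\equiv 1\pmod 4$ in $[x-x^{\theta},x]$ by $\tfrac{2x^{\theta}}{\phi(4)\log(x^{\theta}/4)}\sim \tfrac{x^{\theta}}{\theta\log x}$, which exceeds even the heuristic total $\sim x^{\theta}/\log x$; since the Baker--Harman--Pintz lower bound for the total number of primes in the interval is only $c\,x^{\theta}/\log x$ with $c<1$, one cannot conclude from it alone that some prime in the interval is $\equiv 3\pmod 4$. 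A progression-aware input that gives a lower bound $\gg x^{\theta}/(\phi(4)\log x)$ for the residue class $3\bmod 4$ separately is therefore what is needed, and this is what the cited theorem supplies.

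In short, the proof is: invoke the progression version of the Baker--Harman--Pintz short-interval estimate with $q=4$, $a=3$, exponent $0.55+\epsilon$; check the (trivially satisfied) parameter constraints; read off positivity of the count. No new mathematics is required, so I would expect no serious obstacle here -- the only step needing any care is citing the progression-sensitive form of the result rather than the bare short-interval version, for the Brun--Titchmarsh reason above.
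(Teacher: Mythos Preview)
Your reading is correct and matches the paper exactly: this theorem is not proved in the paper at all but is simply quoted as a special case of Theorem~3 of \cite{BHP}, to be used as a black box. Your additional observation---that one genuinely needs the progression-aware form of the short-interval result, since the bare Baker--Harman--Pintz lower bound combined with Brun--Titchmarsh does not force a prime into the residue class $3\bmod 4$---is correct and a nice point of clarification, though the paper itself does not spell it out.
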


In particular, there exists some $N\in \mathbb{N}$ such that whenever $n>N$ the interval $[n - n^{.6}, n]$ contains a prime $p \equiv 3\mod 4$. There is a quadratic residue tournament $Q_p$ of order $p$, and the corresponding $(p+1)\times (p+1)$ bordered Seidel matrix $S_p$ satisfies $\sqrt{\det(S_p)} = p^{(p+1)/4}\geq (n-n^{.6})^{(n-n^{.6}+1)/4}$. By \Cref{cor:contain} there is some order $n$ tournament achieving the same determinant as $Q_p$, so $M(n) \geq (n-n^{.6})^{(n-n^{.6}+1)/4}$. This is summarized in the following. 

\begin{theorem}\label{th: super-exponential}
    There exists an $N> 0$ such that $n\geq N$ implies that 
        \[
            M(n) \geq (n-n^{.6})^{(n-n^{.6}+1)/4}.
        \]
\end{theorem}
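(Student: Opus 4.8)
The plan is to combine three ingredients already assembled in this section: (i) the bordering construction that produces a skew-conference matrix of order $p+1$ from a doubly-regular tournament of order $p$; (ii) the existence of quadratic residue tournaments $Q_p$ whenever $p$ is a prime power with $p \equiv 3 \bmod 4$; and (iii) the prime-gap estimate of \Cref{primes}, which guarantees a prime $p \equiv 3 \bmod 4$ in the short interval $[n - n^{.6}, n]$ once $n$ exceeds some threshold $N$. The skeleton is: first, fix $\epsilon$ small enough that $0.55 + \epsilon < 0.6$, apply \Cref{primes} to obtain $N$ with the stated interval property; second, for $n \geq N$ select the prime $p \in [n - n^{.6}, n]$ with $p \equiv 3 \bmod 4$; third, form $Q_p$ and its bordered matrix $S_p$, noting $\det S_p = p^{(p+1)/2}$ so that $\sqrt{\det S_p} = p^{(p+1)/4}$; fourth, observe $p \geq n - n^{.6}$ and hence $p^{(p+1)/4} \geq (n - n^{.6})^{(n - n^{.6} + 1)/4}$, using that the function $t \mapsto t^{(t+1)/4}$ is increasing for $t \geq 1$; and finally, invoke \Cref{cor:contain} (taking $k = p+1$ and $\ell = n - p - 1$, both even since $p \equiv 3 \bmod 4$ forces $p+1$ even and $n - p - 1$ has the same parity as $n$ — so we additionally need $n$ even here, or we absorb the parity into the choice) to conclude $p^{(p+1)/4} \in \mathcal{D}(n)$, whence $M(n) \geq (n - n^{.6})^{(n - n^{.6}+1)/4}$.

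A subtlety to handle carefully is the parity and range bookkeeping in the last step. \Cref{cor:contain} requires $n = k + \ell$ with $k, \ell$ both even positive integers, and gives $\mathcal{D}(k)\mathcal{D}(\ell) \subseteq \mathcal{D}(n)$; taking $k = p+1$ (even, since $p$ is odd) we need $\ell = n - p - 1$ to be a positive even integer. Positivity is fine for $n$ large since $p \leq n$ forces $\ell \geq -1$, and one checks $\ell \neq -1, 0$ by noting $p \leq n - 1$ can be arranged (if $p = n$ we are already done directly, or we shrink the interval slightly). Evenness of $\ell$ holds precisely when $n$ is even; for odd $n$ the determinant of any Seidel matrix is $0$, so $\mathcal{D}(n) = \{0\}$ and the claimed inequality is vacuous or the statement is implicitly restricted to even $n$ — I would state the theorem for even $n$ (consistent with the rest of the paper) or remark that it is trivially satisfied otherwise. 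Since $\mathcal{D}(\ell) \ni 1$ by \Cref{lem: det rn is 1}, we get $p^{(p+1)/4} \cdot 1 \in \mathcal{D}(n)$, which is what we want.

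The main obstacle is not any single hard argument but rather ensuring that all the hypotheses of the cited black boxes line up simultaneously: that $\epsilon$ can be chosen with $0.55 + \epsilon < 0.6$ strictly (trivial, but must be said), that "sufficiently large $x$" in \Cref{primes} translates into a clean threshold $N$ valid for all $n \geq N$, and that the prime $p$ we extract is genuinely a prime \emph{power} $\equiv 3 \bmod 4$ so that $Q_p$ exists — here $p$ prime suffices, so no issue. One should also double-check the exponent arithmetic: a skew-conference matrix $C$ of order $m$ satisfies $\det C = (m-1)^{m/2}$ (from $CC\trans = (m-1)I$ and $\det C = \det C\trans$), so with $m = p+1$ we get $\det S_p = p^{(p+1)/2}$ and $\sqrt{\det S_p} = p^{(p+1)/4}$, and $(p+1)/4$ is an integer since $p \equiv 3 \bmod 4$. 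The monotonicity $t^{(t+1)/4}$ increasing on $[1,\infty)$ is immediate from taking logarithms. Assembling these routine checks in order yields the bound, and \Cref{th:max} combined with the fact that $M(n) = (n-1)^{n/4}$ infinitely often shows this super-exponential growth is essentially sharp.
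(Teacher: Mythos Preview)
Your proposal is correct and follows essentially the same route as the paper: locate a prime $p\equiv 3\bmod 4$ in $[n-n^{.6},n]$ via \Cref{primes}, border the Seidel matrix of $Q_p$ to get a skew-conference matrix of order $p+1$ with $\sqrt{\det S_p}=p^{(p+1)/4}$, and then push this value into $\mathcal{D}(n)$ using \Cref{cor:contain}. Your additional bookkeeping on parity and the edge case $p+1=n$ is more careful than the paper's own argument, which simply asserts the conclusion; in particular your observation that for even $n$ the prime $p$ is odd, so $\ell=n-p-1$ is automatically a nonnegative even integer (with the case $\ell=0$ handled directly), fills a small gap the paper leaves implicit.
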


\Cref{thm:expected} implies that $M(n)\geq \sqrt{(n-1)!!}.$ One can verify via Stirling's approximation and standard analysis that the bound in \Cref{th: super-exponential} is stronger.

When $n \equiv 2 \bmod 4$ there is no skew-conference matrix of order $n$, so we cannot achieve the bound in \Cref{th:max}. However, upper bounds for largest determinants of such Seidel tournament matrices have received considerable attention.

\begin{theorem}\cite{AAFG, AF,GS}\label{thm: n=2mod4 skew determinants}
    For $n\equiv 2 \bmod 4$ we have
    \begin{itemize}
        \item[\rm (a)]  $M(n) \leq (2n-3)^{\frac{1}{2}}(n-3)^{\frac{n-2}{4}}$; 
        \item[\rm (b)] equality holds in (a) if and only if there exists a Seidel tournament matrix of order $n$ with 
            \[
                SS^{\trans}= S^{\trans}S= \left[ \begin{array}{c|c} L & O \\ \hline O & L \end{array} \right],
            \]
        where $L=(n-3)I+2J$; 
        \item[\rm (c)] equality holds in (a) if and only if there exists an $(n-1) \times (n-1)$ tournament matrix $A$ with characteristic polynomial 
        \[ (x^3-(2t-1)x^2 -t(4t-1))(x^2+x+t)^{2t-1}, \mbox{ where $t=(n-2)/4$;}  \] 
        \item[\rm (d)] if equality holds in (a), then $2n-3$ is a perfect square; 
        \item[\rm (e)] equality is known to hold for $n=6$, $14$, $26$, $42$. 
    \end{itemize}
\end{theorem}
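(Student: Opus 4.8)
The content is concentrated in part (a); parts (b)--(e) describe and then exploit the equality case.

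\textbf{Part (a).} Let $S$ be a Seidel matrix of order $n\equiv 2\bmod 4$, and put $N=SS\trans=-S^2$ (using $S\trans=-S$). I would first collect the structural features of $N$: it is positive semidefinite and integral, with $N_{ii}=n-1$ and every off-diagonal entry $N_{ij}$ even (a sum of $n-2$ terms $\pm1$); $\operatorname{tr}N=n(n-1)$; $\det N=(\det S)^2$; and every eigenvalue of $N$ has even multiplicity, since those of the real skew-symmetric $S$ occur in conjugate pairs $\pm i\theta_k$. Writing $S=2A+I-J$ for the tournament matrix $A$ and expanding $N=-4A^2-4A+2AJ+2JA-I-(n-2)J$ modulo $4$ gives, for $i\neq j$, $N_{ij}\equiv 2(d_i^{+}+d_j^{+}+1)\bmod 4$, where $d_i^{+}$ is the out-degree of $i$. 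Hence, taking $V_0,V_1$ to be the vertices of even and odd out-degree (both of odd cardinality, since $\binom n2$ is odd), after the matching simultaneous permutation $N$ acquires a two-block shape in which every off-diagonal entry within a block is $\equiv2\bmod4$ and every entry between the blocks is $\equiv0\bmod4$. Now I would invoke the reasoning behind Ehlich's determinant bound: Fischer's inequality gives $\det N\le \det N[V_0]\cdot\det N[V_1]$; for an $a\times a$ positive semidefinite matrix with diagonal $n-1$ and all off-diagonal entries $\equiv2\bmod4$ one shows $\det\le (2a+n-3)(n-3)^{a-1}$, attained only by $(n-3)I+2J_a$; and maximizing the product $(2a+n-3)(3n-2a-3)(n-3)^{n-2}$ over $|V_0|=a$ puts the maximum at $a=n/2$ with value $(2n-3)^2(n-3)^{n-2}$. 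Therefore $\det N\le (2n-3)^2(n-3)^{n-2}$, i.e. $M(n)=\max\sqrt{\det S}\le (2n-3)^{1/2}(n-3)^{(n-2)/4}$, proving (a). (The extremal Gram matrix $L\oplus L$ with $L=(n-3)I+2J_{n/2}$ has eigenvalues $2n-3$ and $n-3$ of even multiplicities $2$ and $n-2$, consistent with the even-multiplicity property.)

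\textbf{Parts (b) and (c).} Equality in (a) forces, via the equality cases of Fischer's inequality and of the per-block bound, that $N=SS\trans$ equals $L\oplus L$ after relabeling the players; and because $S\trans=-S$ makes $S\trans S=-S^2=SS\trans$ automatic, this is precisely condition (b); conversely any Seidel matrix with $SS\trans=L\oplus L$ has $\det S=\det L=(2n-3)(n-3)^{(n-2)/2}$ and meets the bound. For (c) I would pass to the tournament matrix $A_0=\tfrac12(S+J-I)$ of order $n$: the condition $SS\trans=L\oplus L$ rigidly fixes the out-degree sequence and the pairwise overlaps of $A_0$ (it is the skew analogue of a bordered doubly-regular tournament), so deleting the vertex that serves as the border leaves an $(n-1)\times(n-1)$ tournament matrix $A$ whose spectrum is forced; a direct computation produces the characteristic polynomial $(x^3-(2t-1)x^2-t(4t-1))(x^2+x+t)^{2t-1}$ with $t=(n-2)/4$ (its degree $4t+1$ is $n-1$). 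Conversely, bordering any $(n-1)$-vertex tournament with that characteristic polynomial gives a Seidel matrix attaining equality, so (b), (c), and equality in (a) are equivalent.

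\textbf{Parts (d) and (e).} If equality holds, $\sqrt{\det S}=\lvert\operatorname{Pf}S\rvert$ is an integer equal to $(2n-3)^{1/2}(n-3)^{(n-2)/4}$; as $n\equiv2\bmod4$, the exponent $(n-2)/4$ is a nonnegative integer, so $(n-3)^{(n-2)/4}$ is a positive integer and $(2n-3)^{1/2}$ is therefore rational, forcing $2n-3$ to be a perfect square, which is (d). Part (e) is then established by displaying, for each $n\in\{6,14,26,42\}$, an explicit $(n-1)$-vertex tournament with the characteristic polynomial of (c) --- equivalently an extremal Seidel matrix --- coming from known combinatorial designs; e.g. for $n=6$ the bound equals $9^{1/2}\cdot3=9=\max\mathcal D(6)$, matching \Cref{fig: determinant sets}.

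\textbf{The main obstacle} is the Ehlich-type estimate and its equality analysis: one must establish the per-block determinant inequality from the single congruence $N_{ij}\equiv2\bmod4$, rule out every unequal split $\lvert V_0\rvert\neq\lvert V_1\rvert$ and every nonzero cross-block interaction, and so extract the rigidity $SS\trans=L\oplus L$. With that in hand, (c) is a lengthy but routine eigenvalue computation and (d)--(e) are short.
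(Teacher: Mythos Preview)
The paper does not prove this theorem: it is quoted with attribution to \cite{AAFG, AF, GS} and no argument is given in the text. So there is no proof in the paper to compare against.

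That said, your outline is the right one and is essentially what the cited references do. The skeleton --- form $N=SS\trans$, read off the mod-$4$ block structure from out-degree parities, apply Fischer's inequality block-wise, and bound each diagonal block by the Ehlich-type estimate $(2a+n-3)(n-3)^{a-1}$ --- is precisely the adaptation of Ehlich's $n\equiv 2\bmod 4$ Hadamard bound to the skew setting carried out in those papers. Your congruence $N_{ij}\equiv 2(d_i^{+}+d_j^{+}+1)\bmod 4$ and the parity observation that $|V_0|$ and $|V_1|$ are both odd are correct. The step you flag as the obstacle (the per-block determinant inequality together with its equality case, and the vanishing of the cross-block for equality) is indeed where the real work lies; the references handle it by an optimization over positive semidefinite matrices with constrained diagonal and off-diagonal congruences, and that argument is not short. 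Your treatment of (d) is fine: since $(n-2)/2$ is even when $n\equiv 2\bmod 4$, the factor $(n-3)^{(n-2)/2}$ is already a perfect square, forcing $2n-3$ to be one.

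One caution on (c): the passage from $SS\trans=L\oplus L$ to the stated characteristic polynomial of the $(n-1)\times(n-1)$ tournament matrix is more than ``a direct computation.'' Your phrase ``rigidly fixes the out-degree sequence and the pairwise overlaps'' is where the argument actually lives, and the cited papers spend real effort establishing that equivalence; in a full write-up you would need to supply it.
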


We offer a lower bound on $M(n)$ in the case that there exists a skew-conference matrix of order $n+2$. Note that the bound in \Cref{thm: n=2mod4 skew determinants}(a) dominates our bound, and for $n\geq 4$ it is strictly larger than our bound. 
The fact that deleting any two rows and the same columns of an $(n+2) \times (n+2)$ skew-hadamard matrix
results in a matrix of determinant $(n+1)^{(n-2)/4}$ implies the following, and was shown by  Peter Cameron in \cite{CameronBlog}.
For completeness, we include a proof here. 
\begin{theorem}\label{thm: SCM-2}
    If there exists a skew-conference matrix of order $n+2$, then 
    \[ M(n) \geq (n+1)^{(n-2)/4}.\]
\end{theorem}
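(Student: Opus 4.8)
The plan is to start with a skew-conference matrix $C$ of order $n+2$, normalize it so that it looks like a bordered Seidel matrix, and then observe that a suitable principal submatrix of order $n$ is a Seidel tournament matrix whose determinant we can compute directly. Recall that a skew-conference matrix $C$ of order $n+2$ satisfies $CC\trans=(n+1)I$ and has zero diagonal with off-diagonal entries in $\{\pm 1\}$. By replacing $C$ with $DCD$ for a suitable $\pm 1$ diagonal matrix $D$ (which preserves both the skew-conference property and the property of being the Seidel matrix of a tournament up to switching), I may assume that the first row of $C$ is $[\,0\ \ 1\ \cdots\ 1\,]$ and the first column is $[\,0\ \ -1\ \cdots\ -1\,]\trans$; this is the standard normalization. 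Thus
\[
C = \left[\begin{array}{c|c} 0 & j \\ \hline -j\trans & S \end{array}\right],
\]
where $j$ is the all-ones row vector of length $n+1$ and $S$ is an $(n+1)\times(n+1)$ skew-symmetric $\pm1$-off-diagonal matrix, hence the Seidel matrix of a tournament of order $n+1$.

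Next I would further normalize the second row/column of $C$ in the same way, using a diagonal $\pm1$ conjugation that fixes the first coordinate, so that $C$ also has the form that the $(1,k)$ and $(2,k)$ entries are prescribed; concretely, arrange that deleting the first two rows and the first two columns of $C$ leaves a matrix $S'$ of order $n$ which is the Seidel matrix of a tournament. (Any principal submatrix of a Seidel tournament matrix is again one, so this last point is automatic once $C$ itself is put in Seidel-matrix form, which the normalization above already achieves.) So the real content is: \emph{deleting two rows and the matching two columns from an order-$(n+2)$ skew-conference matrix gives a Seidel tournament matrix of order $n$ with determinant $(n+1)^{(n-2)/2}$.} To compute this determinant, write $C$ in block form with the deleted part being the leading $2\times 2$ block:
\[
C = \left[\begin{array}{c|c} E & B \\ \hline -B\trans & S' \end{array}\right],
\]
where $E$ is $2\times 2$ skew-symmetric (so $E = \left[\begin{smallmatrix} 0 & \pm 1 \\ \mp 1 & 0\end{smallmatrix}\right]$) and $B$ is $2\times n$. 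From $CC\trans=(n+1)I$ one reads off, comparing the bottom-right block, that $-B\trans B + S'S'\trans = (n+1)I_n$, i.e. $S'S'\trans = (n+1)I_n + B\trans B$. Since $B$ has rank at most $2$, $B\trans B$ is a positive-semidefinite matrix of rank at most $2$; one computes its nonzero eigenvalues from the $2\times 2$ matrix $BB\trans$, which the conference relation (comparing the top-left block) forces to equal $(n+1)I_2 - E E\trans = (n+1)I_2 - I_2 = nI_2$. Hence $BB\trans = nI_2$, so $B\trans B$ has eigenvalues $n, n, 0, \ldots, 0$, and therefore $S'S'\trans = (n+1)I_n + B\trans B$ has eigenvalues $2n+1$ (twice) and $n+1$ ($n-2$ times). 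Consequently
\[
(\det S')^2 = \det(S'S'\trans) = (2n+1)^2 (n+1)^{n-2},
\]
so $\det S' = (2n+1)(n+1)^{(n-2)/2}$ and $\sqrt{\det S'} = (2n+1)^{1/2}(n+1)^{(n-2)/4}$.

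This actually overshoots the claimed bound $M(n)\ge (n+1)^{(n-2)/4}$, which is fine — but I should double-check the normalization, because the clean computation above secretly required that the two deleted rows of $C$ be chosen so that the corresponding $2\times n$ block $B$ has $BB\trans = nI_2$, which is automatic, \emph{and} that the resulting $S'$ be a genuine Seidel tournament matrix, not merely a skew-symmetric $\pm1$ matrix; that is automatic since principal submatrices of Seidel matrices are Seidel matrices of sub-tournaments. The main obstacle is therefore not the determinant computation but making sure the normalization step is legitimate: I must argue that \emph{some} skew-conference matrix of order $n+2$ can be written (after switching, i.e. $\pm1$ diagonal conjugation, which changes neither the skew-conference property nor membership in $\mathcal{D}$) as the Seidel matrix of a tournament, so that deleting a principal $2\times 2$ block yields an honest Seidel tournament matrix of order $n$. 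This is the point to be careful about, and it follows from the standard fact that every skew-conference matrix is switching-equivalent to the Seidel matrix of a tournament (equivalently, its zero-diagonal, $\pm1$ off-diagonal, skew-symmetric structure \emph{is} exactly the Seidel-matrix structure). Once that is in place, the chain $\sqrt{\det S'} = (2n+1)^{1/2}(n+1)^{(n-2)/4} \ge (n+1)^{(n-2)/4}$ gives the theorem, and in fact a slightly stronger statement than advertised, matching the remark preceding \Cref{thm: SCM-2} (up to the factor $(2n+1)^{1/2}$, which should be reconciled with the blog-post value $(n+1)^{(n-2)/4}$ — likely the intended statement deletes the rows in a way that makes $B\trans B = 0$ impossible, so the honest bound is the weaker $(n+1)^{(n-2)/4}$ obtained by instead just invoking the interlacing/Cauchy–Binet estimate $\det S' \ge (n+1)^{(n-2)/2}$ more crudely; I will present the exact computation above and then state the clean lower bound).
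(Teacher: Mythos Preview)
Your overall strategy---delete a $2\times 2$ principal block from the skew-conference matrix and compute $\det S'$ via the block relation coming from $CC\trans=(n+1)I$---is sound and close in spirit to the paper's proof. However, there is a sign error that invalidates your numerical conclusion.

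With $C=\left[\begin{smallmatrix} E & B\\ -B\trans & S'\end{smallmatrix}\right]$ skew-symmetric and $CC\trans=-C^2=(n+1)I$, the bottom-right block of $C^2$ is $-B\trans B + (S')^2$, so $(S')^2 = B\trans B -(n+1)I_n$ and hence
\[
S'(S')\trans \;=\; -(S')^2 \;=\; (n+1)I_n - B\trans B,
\]
not $(n+1)I_n + B\trans B$. Since $B\trans B$ has eigenvalues $n,n,0,\ldots,0$ (your computation of $BB\trans=nI_2$ is correct), the eigenvalues of $S'(S')\trans$ are $1,1,n+1,\ldots,n+1$, giving
\[
(\det S')^2 = (n+1)^{n-2}, \qquad \sqrt{\det S'} = (n+1)^{(n-2)/4},
\]
exactly the bound claimed; there is no overshoot. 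Two sanity checks you should have caught: your value $(2n+1)(n+1)^{(n-2)/2}$ is not in general a perfect square (whereas $\det S'$ must be), and for large $n$ it exceeds the Hadamard bound $(n-1)^{n/2}$.

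Two further remarks. First, your normalization worries are unnecessary: a skew-conference matrix is by definition a zero-diagonal, skew-symmetric $\{0,\pm1\}$-matrix, hence already the Seidel matrix of some tournament; any principal submatrix is therefore automatically a Seidel tournament matrix. Second, the paper's own argument reaches the same determinant by a slightly different route: it uses the nullity drop under deleting two rows/columns to see that $\pm\sqrt{n+1}\,i$ remain eigenvalues of $S'$ with multiplicity at least $(n-2)/2$, and then pins down the remaining pair $\pm\lambda$ via the Frobenius norm $\sum|s_{k\ell}|^2=n(n-1)$. Your block-algebra approach is a perfectly good alternative once the sign is fixed.
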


\begin{proof}
    Assume that a skew-conference matrix $S=[s_{k\ell}]$ of order $n+2$ exists.  Then $-S^2=S^TS=(n+1)I$, and the eigenvalues of $S$ are $\sqrt{n+1}i$ and $-\sqrt{n+1}i$, each of geometric multiplicity $(n+2)/2$.

    Let $\hat{S}$ be the matrix obtained by deleting the first two rows and columns of $S$. As deleting two rows and columns of a matrix decreases the nullity by at most $2$, both $\sqrt{n+1}i$ and $-\sqrt{n+1}i$ are eigenvalues of 
    $\hat{S}$ of geometric multiplicity at least $(n-2)/2$.
    This leaves two other eigenvalues, 
    $\lambda$ and $-\lambda$, since the eigenvalues of $\hat{S}$ are purely imaginary and occur in complex conjugate pairs. As $S$ is a normal matrix, 
    \[ 
    n(n-1)= \sum_{k=1}^n\sum_{\ell=1}^n |s_{k\ell}|^2=
    \frac{n-2}{2} (\sqrt{n+1})^2 + \frac{n-2}{2} (-\sqrt{n+1})^2 + |\lambda|^2 + |-\lambda|^2
    \]
It follows that $-\lambda^2=|\lambda|^2=1$. Furthermore, since the determinant of a matrix is the product of its eigenvalues,  $\det \hat{S}= (n+1)^{(n-2)/4}$.

\end{proof}

\section{Gaps} \label{sec: gaps}
We now derive an upper bound on the determinant of 
a Seidel matrix having a pair of non-orthogonal rows.
It makes use of the following well-known result \cite{Fischer}, (see also \cite[Statement 7.8.3]{HJ}).

\bigskip\noindent
{\bf Fischer's inequality.}
\\
{\it 
Let $A$ be an $n\times n$, real symmetric positive definite matrix of the 
form 
    \[ 
        A = \left[ \begin{array}{c|c} B & C \\ \hline
                                    C\trans & D 
                    \end{array}\right] 
    \]
where $B$ and $D$ are square. 
Then 
    \[ 
        \det A \leq \det B \det D.
    \]
}

\begin{theorem}\label{thm: bound on non-skew conf det}
Let $S$ be an $n\times n$
Seidel matrix of a tournament with $n$ even. Either $S$
is a skew-conference matrix or 
\[
\det S\leq (n-1)^{\frac{n-2}{2}} \sqrt{(n-1)^2-4} .
\]
\end{theorem}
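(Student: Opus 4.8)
The idea is to apply Fischer's inequality to the positive semidefinite matrix $S\trans S$ after isolating a $2\times 2$ block coming from a pair of non-orthogonal rows. Since $S$ is skew-symmetric with zero diagonal and $\pm1$ off-diagonal entries, $S\trans S = -S^2$ is symmetric positive semidefinite with every diagonal entry equal to $n-1$. If $S$ is not a skew-conference matrix, then $S\trans S \neq (n-1)I$, so there exist two distinct indices $i,j$ with $(S\trans S)_{ij} \neq 0$; because $S\equiv J-I \bmod 2$, one checks that $(S\trans S)_{ij}$ is an even integer, hence $|(S\trans S)_{ij}| \geq 2$.

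\textbf{Main steps.} First I would note $\det(S\trans S) = (\det S)^2$, so it suffices to bound $\det(S\trans S)$ by $(n-1)^{n-2}\bigl((n-1)^2-4\bigr)$. Next, after simultaneously permuting rows and columns of $S$ (which changes neither $\det S$ nor the skew-symmetric structure) I may assume the two indices with $|(S\trans S)_{ij}|\geq 2$ are $1$ and $2$. Write
\[
S\trans S = \left[\begin{array}{c|c} B & C \\ \hline C\trans & D\end{array}\right],
\]
where $B$ is the leading $2\times 2$ block. Then $B = \begin{bmatrix} n-1 & b \\ b & n-1\end{bmatrix}$ with $|b|\geq 2$ an even integer, so $\det B = (n-1)^2 - b^2 \leq (n-1)^2 - 4$. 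For the block $D$, which is an $(n-2)\times(n-2)$ principal submatrix of $S\trans S$, I would apply Hadamard's inequality (as already invoked in \Cref{th:max}): each diagonal entry of $D$ is $n-1$ and $D$ is positive semidefinite, so $\det D \leq (n-1)^{n-2}$. Strictly speaking Hadamard's inequality as used earlier bounds $\det S \le (n-1)^{n/2}$ via column lengths; here I instead use the symmetric form $\det D \le \prod_k D_{kk}$ valid for positive semidefinite $D$ (2.1.P23 of \cite{HJ}), or equivalently bound $\det D$ by the determinant bound on the corresponding principal submatrix of $S$. Fischer's inequality then gives
\[
(\det S)^2 = \det(S\trans S) \leq \det B \cdot \det D \leq \bigl((n-1)^2-4\bigr)(n-1)^{n-2},
\]
and taking square roots yields the claimed bound $\det S \leq (n-1)^{\frac{n-2}{2}}\sqrt{(n-1)^2-4}$.

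\textbf{Technical points to verify.} Fischer's inequality as quoted requires $S\trans S$ to be positive \emph{definite}, so I must first argue $S$ is invertible. If $\det S = 0$ the claimed inequality is trivially true (the right-hand side is positive), so I may assume $S\trans S$ is positive definite and then all its principal submatrices $B, D$ are positive definite as well, legitimizing both Fischer and the Hadamard-type bound on $D$. The one genuinely substantive observation is the parity argument showing $(S\trans S)_{ij}$ is even: writing $(S\trans S)_{ij} = \sum_k s_{ki}s_{kj}$ and using $s_{ki}s_{kj} \equiv 1 \bmod 2$ for $k \notin \{i,j\}$ together with $s_{ii}s_{ij} = 0 = s_{ji}s_{jj}$, one gets $(S\trans S)_{ij} \equiv n-2 \equiv 0 \bmod 2$.

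\textbf{Main obstacle.} I expect the only real subtlety is making sure the ``non-orthogonal rows'' are correctly translated into a rank condition giving $|b| \ge 2$ rather than just $|b| \ge 1$; this is exactly where the integrality and parity of the Seidel matrix entries are essential, and without it one would only recover the weaker bound $\det S \le (n-1)^{\frac{n-2}{2}}\sqrt{(n-1)^2-1}$. Everything else is a routine assembly of Fischer's inequality with the already-established Hadamard bound.
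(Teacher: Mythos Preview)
Your proposal is correct and follows essentially the same route as the paper: both apply Fischer's inequality to $SS\trans$ after isolating a $2\times2$ block with off-diagonal entry of absolute value at least $2$ (via the parity argument), and both bound the complementary $(n-2)\times(n-2)$ principal block by $(n-1)^{n-2}$. The only cosmetic difference is that the paper bounds $\det\hat A$ by writing $\hat A=Y\trans Y$ for the first $n-2$ columns of $S\trans$, applying a QR-type step, and then the column-length form of Hadamard, whereas you invoke the positive-semidefinite form $\det D\le\prod_k D_{kk}$ directly; these are equivalent.
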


\begin{proof}
Let $S$ be an $n\times n$ Seidel matrix of a tournament and let $A = SS\trans$. 
Assume that $S$ is not a skew-conference matrix.
Then the rows of  $S$ are not mutually orthogonal. 
Thus some off-diagonal entry of $A=[a_{ij}] $ is nonzero. 
Without loss of generality we may assume $a_{n-1, n} \neq 0$. 
By parity $|a_{n-1, n}| \geq 2$. 
Partition $A$ as 
 \[
 A= \left[ \begin{array}{c|c} \hat{A} & C \\ \hline
 C\trans& \begin{array}{cc} n-1 & a_{n-1,n} \\ a_{n-1,n} & n-1 
 \end{array} \end{array} \right].
 \]
 
 By Fischer's inequality, we have
 \begin{eqnarray*}
 \label{eq:Fischer}
    (\det S)^2 &=& \det A \\
                &\leq &\det \hat{A} \cdot \det 
                        \left[ \begin{array}{cc} n-1 & a_{n-1,n} \\ a_{n-1,n} & n-1 \end{array} \right] \\
                &=& \det \hat{A} \cdot ((n-1)^2 - a_{n-1,n}^2) \\
                & \leq & \det \hat{A} \cdot ((n-1)^2 - 4) \\
                & \leq & (n-1)^{(n-2)} ((n-1)^2-4).
\end{eqnarray*}
The last inequality comes from the following observation.
Note that $\hat{A}=Y\trans Y$ where $Y$ is the matrix formed by the 
first $n-2$ columns of $S\trans$. There exists an orthogonal matrix $Q$
and an $(n-2) \times (n-2)$ matrix $Z$ 
such $QY= \left[ \begin{array}{c} Z \\O \end{array} \right]$.
Thus $\hat{A}= Y\trans Q\trans QY = \left[ \begin{array}{cc} 
Z\trans & O \end{array} \right] \left[ \begin{array}{c} Z \\O \end{array} \right]= Z\trans Z$. Hence, $\det \hat{A}= (\det Z)^2$. 
Since $Q$ is orthogonal, each column of $Z$ has squared length 
equal to $n-1$, and Hadamard's inequality applied to $
Z$ yields $(\det Z)^2 \leq (n-1)^{n-2}$.
\end{proof}

\begin{cor}
Let $m$ a positive integer divisible by $4$
with $m <(n-1)^{(n-8)/4}.$
There is no $n\times n$ Seidel matrix $S$
with $\sqrt{\det S} = (n-1)^{n/4}-m$.
In particular, if $m \geq 12$ and there exists an $n\times n$
skew-conference matrix then $\mathcal{D}(n)$ contains a gap between its two largest elements. 
\end{cor}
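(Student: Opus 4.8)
The plan is to combine the upper bound of \Cref{thm: bound on non-skew conf det} with the exact value $(n-1)^{n/4}$ attained by a skew-conference matrix, and argue that no Seidel matrix can have Pfaffian strictly between these two quantities when the gap $m$ is in the stated range. First I would observe that if $S$ is an $n\times n$ Seidel matrix with $\sqrt{\det S} = (n-1)^{n/4} - m$ where $1 \le m < (n-1)^{(n-8)/4}$, then $\sqrt{\det S} < (n-1)^{n/4}$, so by \Cref{th:max} (the equality case of Hadamard's inequality) $S$ is \emph{not} a skew-conference matrix. Hence \Cref{thm: bound on non-skew conf det} applies and gives
\[
\big((n-1)^{n/4}-m\big)^2 = \det S \le (n-1)^{\frac{n-2}{2}}\sqrt{(n-1)^2-4}.
\]
The contradiction should come from showing the left side is too large: expanding, $\big((n-1)^{n/4}-m\big)^2 = (n-1)^{n/2} - 2m(n-1)^{n/4} + m^2 > (n-1)^{n/2} - 2m(n-1)^{n/4}$, while the right side is $(n-1)^{n/2-1}\sqrt{(n-1)^2-4} < (n-1)^{n/2-1}(n-1) = (n-1)^{n/2}$ minus a positive correction. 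So it suffices to check $(n-1)^{n/2} - 2m(n-1)^{n/4} \ge (n-1)^{\frac{n-2}{2}}\sqrt{(n-1)^2-4}$, i.e. that $2m(n-1)^{n/4}$ is at most the difference $(n-1)^{n/2} - (n-1)^{\frac{n-2}{2}}\sqrt{(n-1)^2-4}$.

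The main routine estimate is to lower-bound that difference. Writing $N = n-1$ and using $\sqrt{N^2-4} \le N - 2/N$ (valid since $(N-2/N)^2 = N^2 - 4 + 4/N^2$), we get $N^{n/2} - N^{(n-2)/2}\sqrt{N^2-4} \ge N^{(n-2)/2}\big(N - (N - 2/N)\big) = N^{(n-2)/2}\cdot \tfrac{2}{N} = 2N^{(n-4)/2}$. So it is enough that $2m\,N^{n/4} \le 2N^{(n-4)/2}$, i.e. $m \le N^{(n-4)/2 - n/4} = N^{(n-8)/4} = (n-1)^{(n-8)/4}$, which is exactly the hypothesis. This proves there is no $n\times n$ Seidel matrix with $\sqrt{\det S} = (n-1)^{n/4}-m$ when $4 \mid m$ and $m < (n-1)^{(n-8)/4}$; the divisibility by $4$ is used only to guarantee $(n-1)^{n/4}-m$ is an odd integer of the correct residue class (it keeps the candidate value inside the set where $\mathcal D(n)$ lives, and makes the "gap" statement meaningful).

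For the final sentence: when an $n\times n$ skew-conference matrix exists, $(n-1)^{n/4} = M(n) = \max \mathcal D(n) \in \mathcal D(n)$ by \Cref{th:max}. If $m \ge 12$ with $4 \mid m$ and $m < (n-1)^{(n-8)/4}$, then by the first part $(n-1)^{n/4}-m \notin \mathcal D(n)$. It remains to exhibit an element of $\mathcal D(n)$ strictly between $(n-1)^{n/4}-m$ and $(n-1)^{n/4}$: \Cref{thm: SCM-2} does not directly apply (it needs a skew-conference matrix of order $n+2$), but we can instead take a suitable Seidel matrix of large Pfaffian below $M(n)$—for instance, one obtained by the reversal construction of \Cref{lem: inverse-det property} applied to the skew-conference matrix, or by joining (via \Cref{cor:contain}) smaller high-determinant blocks—to produce a value in the interval $\big((n-1)^{n/4}-m,\, (n-1)^{n/4}\big)$. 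Concretely, since $m \ge 12$, the interval contains at least two admissible odd integers, and any construction yielding Pfaffian $(n-1)^{n/4}-2$ or similar (e.g. a single-arc reversal of the skew-conference matrix, whose effect on $\det$ is governed by \Cref{lem: inverse-det property}) lands strictly inside. This witness, together with $M(n)$ itself and the forbidden value $(n-1)^{n/4}-m$, shows $\mathcal D(n)$ has a gap between its two largest elements.

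The step I expect to be the main obstacle is \emph{not} the numerical estimate—that is the clean computation sketched above—but rather pinning down that there genuinely is an element of $\mathcal D(n)$ strictly above $(n-1)^{n/4}-m$ and strictly below $M(n)$, so that the forbidden value is sandwiched and the word "gap" is justified; this requires producing an explicit near-maximal Seidel matrix (via reversal or join), and one must check its Pfaffian is odd, lies in the right window, and differs from $M(n)$.
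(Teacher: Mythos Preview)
Your estimate for the first claim is correct and follows the same strategy as the paper: rule out the skew-conference case via \Cref{th:max}, then invoke \Cref{thm: bound on non-skew conf det} and compare. Working at the level of $\det S$ and using the inequality $\sqrt{N^2-4}\le N-2/N$ is a clean variant of the paper's computation, which instead takes fourth roots and rationalizes to show that $g_n:=(n-1)^{n/4}-(n-1)^{(n-2)/4}\bigl((n-1)^2-4\bigr)^{1/4}\ge (n-1)^{(n-8)/4}$. The two arguments are equivalent.

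Your treatment of the final sentence, however, contains a genuine misunderstanding. You attempt to \emph{produce} an element of $\mathcal{D}(n)$ in the open interval $\bigl((n-1)^{n/4}-m,\,(n-1)^{n/4}\bigr)$, but no such element exists: that is precisely what the first part (together with \Cref{thm: bound on non-skew conf det}) establishes, since every non-skew-conference $S$ has $\sqrt{\det S}\le (n-1)^{n/4}-g_n$. The phrase ``gap between its two largest elements'' only asserts that the second-largest element of $\mathcal{D}(n)$ is strictly smaller than $M(n)-2$, equivalently that the odd integer $M(n)-2$ sits strictly between the two largest elements of $\mathcal{D}(n)$. This follows immediately from $g_n>2$, which holds for $n\ge 12$ (the hypothesis ``$m\ge 12$'' in the statement is a typo for ``$n\ge 12$'', as the paper's own proof makes clear); nothing further is needed beyond the trivial observation that $|\mathcal{D}(n)|\ge 2$. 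Your proposed single-arc reversal cannot salvage the approach: for a skew-conference matrix one has $S^{-1}=-\tfrac{1}{n-1}S$, so \Cref{lem: inverse-det property} gives $\sqrt{\det S'}=(n-1)^{(n-4)/4}(n-3)=M(n)-2(n-1)^{(n-4)/4}$, which is far below $M(n)-(n-1)^{(n-8)/4}$ and hence outside the interval you are trying to populate.
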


\begin{proof}
Define $g_n= (n-1)^{n/4}- (n-1)^{(n-2)/4}\left( (n-1)^2 - 4\right)^{1/4}$.
Then 
    \begin{eqnarray*}
        g_n&=&(n-1)^{(n-2)/4}(\sqrt{n-1} - \sqrt{\sqrt{n^2-2n-3}})\\ 
            &= & (n-1)^{(n-2)/4} \frac{n-1 - \sqrt{n^2-2n-3}}{\sqrt{n-1} + \sqrt{\sqrt{n^2-2n-3}}}\\
            &=&  (n-1)^{(n-2)/4} \frac {(n-1)^2 - (n^2-2n-3)}{\left(\sqrt{n-1} - \sqrt{\sqrt{n^2-2n-3}}\right)  \left(n-1 +\sqrt{n^2-2n-3}\right)} \\ 
            & \geq &  (n-1)^{(n-2)/4} \frac{4}{ 2 \sqrt{n-1}
            \cdot (n-1)}\\ 
            & = &    (n-1)^{(n-8)/4}.
\end{eqnarray*}

By \Cref{thm: bound on non-skew conf det} if $S$ is a Seidel tournament matrix which is not skew-conference, then 
    \[
        \sqrt{\det S} < (n-1)^{(n-2)/4}\left( (n-1)^2 - 4\right)^{1/4} = (n-1)^{n/4} - g_n \leq (n-1)^{n/4} - (n-1)^{(n-8)/4}.
    \]
Hence, for each positive integer $m$ with $m \leq (n-1)^{(n-8)/4}$
there is no $n\times n$ Seidel matrix $S$ with 
$\sqrt{\det S}=(n-1)^{n/4}-m$. 

For $n\geq 12$ we have $(n-1)^{(n-8)/4}>2$, and thus the claimed gap in $\mathcal{D}(n)$ exists. 
\end{proof}

Our argument depends on the existence of a skew-conference matrix. We suspect that this is not a necessary assumption.

\begin{conj}
    There is a gap in $\mathcal{D}(n)$ for all even $n\geq 8$.
\end{conj}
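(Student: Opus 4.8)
\emph{An approach, with the case analysis organized by $n\bmod 4$.} The smallest cases $n=8$ and $n=10$ are already settled by the computations recorded in \Cref{fig: determinant sets}: there $29\notin\mathcal{D}(8)$ lies strictly between the elements $27$ and $31$, and $131\notin\mathcal{D}(10)$ lies strictly between $129$ and $133$, so one may assume $n\ge 12$. For $n\equiv 0\bmod 4$ the corollary of \Cref{thm: bound on non-skew conf det} already produces a gap between the two largest elements of $\mathcal{D}(n)$ whenever $n\ge 12$ and a skew-conference matrix of order $n$ exists; since skew-conference matrices exist for infinitely many orders divisible by $4$ (for instance there is one of order $q+1$ for every prime power $q\equiv 3\bmod 4$, via the Paley/doubly-regular construction recalled in \Cref{sec: large}), this disposes of infinitely many $n\equiv 0\bmod 4$ unconditionally, and of all of them under the conjectured existence of doubly-regular tournaments of every admissible order. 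Removing the hypothesis for a particular such $n$ amounts to exhibiting a skew-conference matrix of that order.

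The genuinely new case is $n\equiv 2\bmod 4$, where no Seidel matrix is a skew-conference matrix and the operative upper bound is the Ehlich--Wojtas type inequality $M(n)\le(2n-3)^{1/2}(n-3)^{(n-2)/4}$ of \Cref{thm: n=2mod4 skew determinants}, equality forcing $SS\trans$ to be block diagonal with two diagonal blocks each equal to $L=(n-3)I+2J$. I would attempt to upgrade this extremal characterization to a quantitative stability statement: if the Gram matrix $G=SS\trans$ of an $n\times n$ Seidel matrix is not of that block form, then $\det G$ is bounded away from its extremal value by a definite multiplicative factor, $\det G\le(1-c_n)(\det L)^2$ with $c_n$ not decaying too fast. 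Such stability should be extractable from the Fischer/Hadamard-style argument behind the extremal bound — the proof of \Cref{thm: bound on non-skew conf det} already loses a definite factor from a single pair of non-orthogonal rows — by iterating that estimate while tracking how many off-diagonal entries of $G$ differ from the extremal pattern. Since $(\det S)^2=\det G$, such a bound forces $\sqrt{\det S}\le(1-c_n)^{1/4}M(n)$ for every non-extremal $S$; combined with the fact that the extremal matrix is actually realized (known for $n=6,14,26,42$) and with a large lower bound for $M(n)$ — for example \Cref{thm: SCM-2} gives $M(n)\ge(n+1)^{(n-2)/4}$ whenever a skew-conference matrix of order $n+2$ exists, which by the Paley construction holds for all $n=q-1$ with $q\equiv 3\bmod 4$ a prime power — this would place the second-largest element of $\mathcal{D}(n)$ a full multiplicative factor, hence far more than $4$, below $M(n)$, giving a gap at the top of $\mathcal{D}(n)$.

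The main obstacle is that for all but finitely many $n\equiv 2\bmod 4$ the Ehlich--Wojtas bound is not known to be attained, so $M(n)$ itself is unknown; then bounding the second-largest element of $\mathcal{D}(n)$ from above requires understanding which values just below the (possibly unattained) extremal value are realized as $\det S$ for a Seidel matrix, i.e.\ a classification of near-extremal Seidel Gram matrices, which seems out of reach with current tools. Together with the dependence of the $n\equiv 0\bmod 4$ argument on the existence of skew-conference matrices, this suggests that a fully unconditional proof of the conjecture would require a gap-producing mechanism that is not anchored at the extremal matrix — for instance a counting or averaging argument in the spirit of \Cref{sec: average}, or a congruence obstruction ruling out an entire residue class of potential values of $\sqrt{\det S}$ — rather than a refinement of the rigidity arguments used above.
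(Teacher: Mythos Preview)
This statement is a \emph{conjecture} in the paper, not a theorem: the paper offers no proof, only the remark that the cases $n\in\{8,10,12\}$ are confirmed by the SageMath data in \Cref{fig: determinant sets}. So there is no paper proof to compare against.

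Your write-up is not a proof either, and to your credit you say so explicitly: you sketch a programme, identify exactly where it stalls, and explain why. Your assessment is accurate and tracks the paper's own implicit reasoning. For $n\equiv 0\bmod 4$ with $n\ge 12$, the corollary following \Cref{thm: bound on non-skew conf det} indeed yields a gap at the top of $\mathcal{D}(n)$ \emph{conditional} on a skew-conference matrix of order $n$, and you correctly flag that removing this hypothesis is equivalent to the (open) skew-Hadamard conjecture. For $n\equiv 2\bmod 4$ you correctly shift to the bound in \Cref{thm: n=2mod4 skew determinants}, and your proposed ``stability'' upgrade of the Fischer/Hadamard argument is a natural idea; but, as you note, even a sharp stability statement would only separate the extremal value from everything else, and for all but the four listed values of $n$ it is not known that the extremal value is attained, so one cannot conclude that $\mathcal{D}(n)$ actually contains the top endpoint of the would-be gap.

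In short: there is no genuine error in your discussion, but there is also no proof --- the obstacles you name (existence of skew-conference matrices for all $n\equiv 0\bmod 4$; attainment of the Ehlich--Wojtas bound for $n\equiv 2\bmod 4$; or an entirely different, non-extremal gap mechanism) are precisely why the paper states this as a conjecture rather than a theorem.
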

For $n \in \{8, 10, 12\}$ the conjecture can be verified via computation in SageMath (see \Cref{fig: determinant sets}).

\section{Characteristic Polynomials of Seidel Matrices} \label{sec: charpoly}
In previous sections of this paper we investigated possible values of the determinant of an $n\times n$ Seidel tournament matrix $S$. In this section we broaden our scope to consider possible characteristic polynomials of such matrices, $c_S(x)$, with a particular focus on principal minors of skew-conference matrices. Note that $c_S(x) = \det(xI - S)$, so $\det S = c_S(0)$. In the case that $S$ has odd order, $\det S = 0$. All possible characteristic polynomials of $n \times n$ Seidel matrices, for $n \leq 6$, are listed in \Cref{fig: charpoly sets} as computed in SageMath. We denote the set of characteristic polynomials of Seidel matrices of order $n$ by $\mathcal{CP}(n)$.
    \[
        \mathcal{CP}(n) = \{ p(x) \mid p(x) \mbox{ is the characteristic polynomial of a Seidel matrix of order }n\}.
    \]

\begin{figure}[h]
    \centering
    \begin{tabular}{l|l}
     $n$&  $\mathcal{CP}(n)$\\
     \hline
     1&$x$\\
     \hline
     2& $x^2+1$\\
     \hline
     3& $x^3+3x$\\
     \hline
     4& $x^4 + 6x^2 + 1$\\
      & $x^4 + 6x^2 + 9$\\
      \hline
     5& $x^5 + 10x^3 + 5x$\\
      & $x^5 + 10x^3 + 21x$\\
      \hline
     6& $x^6 + 15x^4 + 15x^2 + 1$\\
           & $x^6 + 15x^4 + 47x^2 + 1$\\
      & $x^6 + 15x^4 + 39x^2 + 9$\\

      & $x^6 + 15x^4 + 55x^2 + 25$\\
      & $x^6 + 15x^4 + 63x^2 + 49$\\
      & $x^6 + 15x^4 + 63x^2 + 81$
\end{tabular}
    \caption{Characteristic polynomials of $n\times n$ Seidel matrices for $n\leq 6$.}
    \label{fig: charpoly sets}
\end{figure}

\cref{fig: charpoly7} lists the ordered pair of the  $x^3$ and $x^1$ coefficients of elements in $\mathcal{CP}(7)$. Thus, while there are $2^{21}$ Seidel matrices of order $7$, there are only 11 possible characteristic polynomials. \cref{fig: charpoly8} lists the $50$ triples of the $x^4$, $x^2$, and $x^0$ coefficients of  $2^{28}$ matrices in $\mathcal{CP}(8)$. 

\begin{figure} 
\[\begin{array}{rrrrrr}
(35, 7) & 
(99, 7)&
(83, 23) & 
(67, 39) &
(115, 55)& 
(99, 71) \\
(115, 119) &
(99, 135)&
(115, 183)&
(131, 231) & 
(147, 343) 
\end{array}
\]
\caption{Coefficients of $x^3$ and $x^1$ of Seidel matrices of order $7$.}
    \label{fig: charpoly7}
\end{figure}

\begin{figure}
\[
\begin{array}{rrrrr}
(70, 28, 1) & 
 (198, 28, 1)&
(166, 60, 1) &
(134, 92, 1)&
 (198, 156, 1)\\ 
(166, 188, 1)&
(198, 284, 1) &
(198, 412, 1)& 
(230, 508, 1) &
(142, 76, 9)\\
(110, 108, 9)& 
(174, 172, 9) &
(206, 268, 9) &
(174, 300, 9) &
 (206, 396, 9)\\ 
 (238, 620, 9) &
(190, 140, 25)&
(158, 172, 25)&
(190, 268, 25)&
 (222, 364, 25)\\
(222, 492, 25) &
(182, 252, 49) &
(214, 348, 49) &
(214, 476, 49) &
 (246, 700, 49)\\
(182, 348, 81) &
(150, 252, 81)&
(214, 316, 81) & 
 (214, 444, 81) &
 (246, 668, 81)\\
 (190, 300, 121)&
 (222, 396, 121)&
(222, 524, 121) &
 (254, 748, 121) &
 (206, 364, 169) \\
 (238, 588, 169)&
 (198, 444, 225) &
 (230, 540, 225)&
 (230, 476, 289)&
 (230, 604, 289)\\ 
(238, 652, 361)&
 (222, 588, 441)&
 (254, 812, 441)&
  (246, 732, 529)&
   (246, 764, 625)\\
 (222, 684, 729)&
 (262, 924, 961) &
 (262, 924, 1089)&
 (270, 1036, 1225)&
 (294, 1372, 2401) 
\end{array} 
\]
\caption{Coefficients of $x^4$, $x^2$, $x^0$ of Seidel matrices of order $8$.}
    \label{fig: charpoly8}
\end{figure}

We now give some  basic properties of characteristic polynomials of Seidel matrices of tournaments.
\begin{proposition}
\label{prop:simple}
Let $S$ be the Seidel matrix of a tournament of order $n$.
Then the characteristic polynomial $c_S(x)$ satisfies the following.
\begin{itemize}
\item[\rm (a)] The  nonzero roots of $c_S(x)$ are purely imaginary and occur in complex conjugate pairs;
\item[\rm (b)] $0$ is a root of $c_S(x)$ if and only if $n$ is odd;
\item[\rm (c)] each coefficient of $x^{n-k}$ is $0$ if $n$ and $k$ have different parity, and is at least $n \choose k$ if $n$ and $k$ have the same parity; and
\item[\rm (d)] if $\hat{S}$ is switching equivalent to $S$, 
then $c_S(x)= c_{\hat{S}}(x). $
\end{itemize} 
\end{proposition}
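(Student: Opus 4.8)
The plan is to verify each of the four parts in turn, most of them using only standard linear-algebra facts about real skew-symmetric matrices, with part (c) requiring the most combinatorial care.

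\textbf{Parts (a) and (b).} Since $S$ is a real skew-symmetric matrix, $iS$ is Hermitian, so its eigenvalues are real; hence the eigenvalues of $S$ are purely imaginary. The characteristic polynomial $c_S(x)$ has real coefficients (indeed $S$ is a real matrix), so its non-real roots come in complex conjugate pairs. For (b), observe that $0$ is a root of $c_S(x)$ iff $S$ is singular iff $\det S = 0$; as noted in the introduction, $\det S = (-1)^n \det S$ forces $\det S = 0$ when $n$ is odd, and the Pfaffian argument (or simply the data, or \Cref{lem: det rn is 1} for a concrete witness) shows $\det S$ is an odd square, hence nonzero, when $n$ is even. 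So $0$ is a root exactly when $n$ is odd.

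\textbf{Part (c).} Write $c_S(x) = \det(xI - S) = \sum_{k=0}^n (-1)^k E_k(S) x^{n-k}$, where $E_k(S)$ is the sum of all $k\times k$ principal minors of $S$. Each $k \times k$ principal submatrix of $S$ is itself a skew-symmetric matrix (it is the Seidel matrix of the induced sub-tournament), so when $k$ is odd every such minor vanishes, giving the coefficient $0$; more precisely, the coefficient of $x^{n-k}$ is $(-1)^k E_k(S)$, and $E_k(S) = 0$ when $k$ is odd, which yields the parity statement (the coefficient of $x^{n-k}$ is zero unless $n$ and $k$ have the same parity, equivalently unless $n-k$ is even). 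When $k$ is even, each $k\times k$ principal minor equals the square of the Pfaffian of the corresponding principal submatrix, hence is a nonnegative integer; moreover that Pfaffian is a sum of $\pm 1$ terms over perfect matchings of $K_k$ with one summand being $+1$ (corresponding to the matching $\{1,2\},\{3,4\},\dots$), so it is an odd integer and the minor is an odd positive integer, in particular $\geq 1$. Summing over the $\binom{n}{k}$ principal $k\times k$ submatrices gives $E_k(S) \geq \binom{n}{k}$, and since $k$ is even $(-1)^k E_k(S) = E_k(S) \geq \binom{n}{k}$, as claimed. The main point requiring attention here is the "$\geq 1$" bound on each even principal minor: it is cleanest to argue via the Pfaffian's leading matching term rather than trying to argue positivity directly, so I would state the Pfaffian-parity fact explicitly (it is already invoked in \Cref{sec: average}).

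\textbf{Part (d).} If $\hat S = DSD$ for a diagonal $\pm 1$ matrix $D$, then $D^{-1} = D$ and $\hat S$ is similar to $S$, so $c_{\hat S}(x) = \det(xI - DSD) = \det(D(xI - S)D) = (\det D)^2 \det(xI - S) = c_S(x)$. This is immediate and I would present it in one line. The only genuine obstacle among the four is the lower bound in (c), and even that is routine once the Pfaffian-parity observation is in hand; the rest is bookkeeping with the standard identity $c_S(x) = \sum (-1)^k E_k(S) x^{n-k}$.
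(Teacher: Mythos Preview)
Your proof is correct and follows essentially the same approach as the paper's: part (a) via the purely imaginary spectrum of real skew-symmetric matrices, and parts (b)--(c) via the principal-minor expansion $c_S(x)=\sum_k(-1)^kE_k(S)x^{n-k}$ together with the fact that each even-order principal submatrix is itself a Seidel matrix with determinant an odd square (hence $\geq 1$). Your argument for (d) via similarity ($\hat S = DSD = D^{-1}SD$) is slightly more direct than the paper's, which folds (d) in with (b)--(c) by implicitly using that switching preserves every principal minor, but both are immediate.
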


\begin{proof}
Statement (a) comes from the fact that the eigenvalues of a real skew-symmetric matrix are purely imaginary and occur in complex conjugate pairs. 

Statements (b)--(d) follow from the facts that each even order Seidel matrix has determinant at least 1, each odd order skew symmetric matrix has determinant 0, and for $k \in \{1, \ldots, n\}$ 
the coefficient of $x^{n-k} $ in $c_M(x)$  equals $(-1)^ks_k$,
where $s_k$ is the sum of the determinants of all $k\times k$
principal minors of $M$.
\end{proof} 

We note that equality holds in (c) of \cref{prop:simple} when $k=n-2$. We next turn our attention to skew-conference matrices. 
We make use of Jacobi's determinantal identity (see \cite[Statement 0.8.4]{HJ}). This identity relates the determinant of a principal submatrix 
of an invertible matrix $A$ to the complementary submatrix of $A^{-1}$.

\bigskip\noindent
{\bf Jacobi's determinantal identity.}
\\
Let $A$ be an invertible matrix. Let $A[\alpha]$ be the principal submatrix of $A$ whose rows have index in $\alpha$, and let 
$A^{-1}(\alpha)$ be the principal submatrix of $A^{-1}$ whose rows have index not in $\alpha$. 
Then 
\[\det A \cdot (\det A^{-1}(\alpha))=  \det A[\alpha].\]

\begin{cor}
\label{cor:Jacobi}
Let $S$ be an $n\times n$ skew-conference matrix, 
and let $\alpha$ be a subset of $\{1,2,\ldots, n\}$ 
of cardinality $k \leq n/2$. 
Then 
\[ (x^2+n-1)^{n/2-k}   c_{S[\alpha]}(x)=c_{S(\alpha)}(x). \]
\end{cor}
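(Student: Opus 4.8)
\emph{Proof proposal.} The plan is to apply Jacobi's determinantal identity to the matrix $A = xI - S$, after first exploiting the rigid algebraic structure that being a skew-conference matrix forces on $S$. Since $S$ is skew-symmetric, $S^\top = -S$, and since $S$ is a skew-conference matrix, $(n-1)I = SS^\top = -S^2$; hence $S^2 = -(n-1)I$. Consequently
\[ (xI - S)(xI + S) = x^2 I - S^2 = (x^2 + n - 1) I, \]
so for every $x$ that is not a root of $x^2 + n - 1$ the matrix $xI - S$ is invertible with $(xI-S)^{-1} = (x^2+n-1)^{-1}(xI+S)$. Taking determinants of the displayed identity and using $(xI+S)^\top = xI - S$ gives $\det(xI-S)^2 = (x^2+n-1)^n$, and since $\det(xI-S)$ is monic of degree $n$, comparing leading coefficients yields $c_S(x) = \det(xI - S) = (x^2+n-1)^{n/2}$. (Alternatively, the eigenvalues of $S$ are $\pm\sqrt{n-1}\,i$, each of multiplicity $n/2$.)

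Now fix $x$ with $x^2 + n - 1 \ne 0$ and apply Jacobi's identity to $A = xI - S$ and the index set $\alpha$ of size $k$:
\[ \det(xI - S)\cdot \det\bigl((xI-S)^{-1}(\alpha)\bigr) = \det\bigl((xI - S)[\alpha]\bigr). \]
The right-hand side is $\det(xI_k - S[\alpha]) = c_{S[\alpha]}(x)$. For the left-hand side, the complementary $(n-k)\times(n-k)$ principal submatrix of $(xI-S)^{-1} = (x^2+n-1)^{-1}(xI + S)$ is $(x^2+n-1)^{-1}(xI_{n-k} + S(\alpha))$, so its determinant is $(x^2+n-1)^{-(n-k)}\det(xI_{n-k} + S(\alpha))$; and since $S(\alpha)$ is itself skew-symmetric, $(xI+S(\alpha))^\top = xI - S(\alpha)$ forces $\det(xI + S(\alpha)) = \det(xI - S(\alpha)) = c_{S(\alpha)}(x)$. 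Substituting $\det(xI-S) = (x^2+n-1)^{n/2}$ and rearranging gives
\[ c_{S[\alpha]}(x) = (x^2+n-1)^{\,n/2 - (n-k)}\, c_{S(\alpha)}(x) = (x^2+n-1)^{\,k - n/2}\, c_{S(\alpha)}(x), \]
which is the asserted identity after multiplying through by $(x^2+n-1)^{n/2-k}$ (legitimate since $k \le n/2$ makes this exponent nonnegative, so both sides are genuine polynomials).

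The equality above has so far been checked only for the cofinitely many $x$ with $x^2 + n - 1 \ne 0$; since two polynomials agreeing at infinitely many points coincide, it holds identically in $x$. The only points needing care are the bookkeeping in Jacobi's identity — that $(xI-S)[\alpha]$ is the $k\times k$ block while $(xI-S)^{-1}(\alpha)$ is the complementary $(n-k)\times(n-k)$ block of the \emph{inverse} — and this passage from a cofinite family of scalar identities to a formal polynomial identity. Neither is a genuine obstacle, so I expect the argument to be short; the only real content is the observation $S^2 = -(n-1)I$, which both furnishes $c_S(x)$ and makes $(xI-S)^{-1}$ a scalar multiple of $xI+S$.
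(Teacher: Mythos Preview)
Your proof is correct and follows essentially the same route as the paper: compute $(xI-S)^{-1}=(x^2+n-1)^{-1}(xI+S)$ from $S^2=-(n-1)I$, apply Jacobi's identity to $A=xI-S$, and use the skew-symmetry of $S(\alpha)$ to identify $\det(xI+S(\alpha))$ with $c_{S(\alpha)}(x)$. The only cosmetic difference is that the paper works directly over the rational function field $\mathbb{R}(x)$, whereas you verify the identity for cofinitely many scalars $x$ and then invoke polynomial identity; both are fine.
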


 \begin{proof}
Let $A=xI-S$. Note 
\[(xI-S)(xI+S)= x^2I+SS^T = (x^2+n-1) I.\] 
Thus, as a matrix in $\mathbb{R}(x)$, 
$A$ is invertible, and $A^{-1}= \frac{1}{x^2+n-1}(xI+S)$.

By Jacobi's determinantal identity, 
$\det (xI-S) \cdot \det A^{-1} (\alpha)= \det A[\alpha]$
So 
\[ 
(x^2+n-1)^{n/2} \frac{1}{(x^2+n-1)^{n-k}} \det (xI+S(\alpha)) = \det (xI-S[\alpha]). \]
Thus
\[ c_{S(\alpha)}(x) = c_{S(\alpha)\trans}(x) = c_{-S(\alpha)}(x) = c_{S[\alpha]} (x)\cdot (x^2+n-1)^{n/2-k}.
 \]
\end{proof}

We note that \Cref{cor:Jacobi} implies that if $S$ is an $n\times n$ skew-conference matrix and $\hat{S}$ is an $\ell \times \ell$ principal submatrix of $S$ with $\ell \geq \frac{n}{2}$, then $\frac{c_{\hat{S}}(x)}{{(x^2+n-1)}^{\ell}}$ is a polynomial in $\mathcal{CP}(n-\ell)$.

\begin{cor}
\label{cor:skewresults}
Let $S$ be an $n\times n$ skew-conference matrix. 
Then 
\begin{itemize}
\item[(a)] Each principal $\ell \times \ell$ submatrix of $S$
with $\ell > n/2$ has $\sqrt{n-1}i$ as an eigenvalue of multiplicity 
at least $\ell -n/2$.
\item[(b)] If $\hat S$ is a $k\times k$ principal submatrix of $S$
that is also a skew-conference matrix, then $n \geq 2k$.
\item[(c)] If $\alpha$ is a subset of $\{1,\ldots, n\}$ of 
cardinality $n/2$, then $S[\alpha]$ and its complement $S(\alpha)$
have the same characteristic polynomials.
\end{itemize}
\end{cor}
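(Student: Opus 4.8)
The plan is to derive all three statements as direct consequences of \Cref{cor:Jacobi}, together with the basic spectral facts about skew-conference matrices already established in the proof of \Cref{thm: SCM-2} (namely that an $n\times n$ skew-conference matrix $S$ has eigenvalues $\pm\sqrt{n-1}\,i$, each of multiplicity $n/2$). For part (a), I would take $\alpha$ to be the index set of the complementary $(n-\ell)\times(n-\ell)$ submatrix, so that $S[\alpha]$ in the notation of \Cref{cor:Jacobi} is the complement of our $\ell\times\ell$ submatrix $\hat S=S(\alpha)$; applying the identity $(x^2+n-1)^{n/2-k}c_{S[\alpha]}(x)=c_{S(\alpha)}(x)$ with $k=n-\ell<n/2$ shows that $(x^2+n-1)^{\ell-n/2}$ divides $c_{\hat S}(x)$. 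Since $x^2+n-1$ has roots $\pm\sqrt{n-1}\,i$, this forces $\sqrt{n-1}\,i$ (and its conjugate) to be an eigenvalue of $\hat S$ of multiplicity at least $\ell-n/2$.

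For part (b), suppose $\hat S$ is a $k\times k$ principal submatrix of $S$ that is itself a skew-conference matrix, and assume for contradiction that $2k>n$, i.e. $k>n/2$. Then part (a) applies with $\ell=k$: $\sqrt{n-1}\,i$ is an eigenvalue of $\hat S$ of multiplicity at least $k-n/2\geq 1$. But since $\hat S$ is a $k\times k$ skew-conference matrix, its eigenvalues are exactly $\pm\sqrt{k-1}\,i$. As $k\le n$ and in fact $k<n$ (a skew-conference matrix cannot be a proper principal submatrix of itself of the same order), we have $\sqrt{k-1}\neq\sqrt{n-1}$, so $\sqrt{n-1}\,i$ cannot be an eigenvalue of $\hat S$ — a contradiction. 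Hence $n\ge 2k$. (One should also dispose of the degenerate possibility $k=n$, which trivially satisfies $n\ge 2k$ only when $n=0$; but a $k\times k$ principal submatrix of an $n\times n$ matrix with $k=n$ is the whole matrix, which is fine to exclude or handle separately as $n\ge 2k$ fails — I'd phrase the statement for proper submatrices, matching the intent.)

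For part (c), I simply specialize \Cref{cor:Jacobi} to $|\alpha|=k=n/2$: the exponent $n/2-k$ becomes $0$, so the identity reads $c_{S[\alpha]}(x)=c_{S(\alpha)}(x)$, which is exactly the claim that a half-size principal submatrix and its complement share a characteristic polynomial.

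I do not anticipate a genuine obstacle here — each part is a short deduction — but the one point requiring care is part (b): one must be careful about the edge case where the orders coincide or where $k-1=n-1$ could accidentally hold, and must invoke the precise eigenvalue multiplicity of a $k\times k$ skew-conference matrix rather than just "it has $\pm\sqrt{k-1}\,i$ as eigenvalues." Getting the bookkeeping of which index set plays the role of $\alpha$ in \Cref{cor:Jacobi} correct (complement versus the set itself) is the only other place an error could creep in.
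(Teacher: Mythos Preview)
Your proposal is correct and is exactly the intended derivation: the paper states this corollary without proof, as an immediate consequence of \Cref{cor:Jacobi}, and your argument---choosing $\alpha$ to be the complement of the large submatrix for (a), contradicting the eigenvalue list of a smaller skew-conference matrix for (b), and setting $k=n/2$ for (c)---is precisely how those consequences fall out. Your observation about the $k=n$ edge case in (b) is also on point; the statement is tacitly about proper submatrices, and your handling of it is appropriate.
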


\begin{cor}
If there exists an $n\times n$ skew-conference matrix
having a $k\times k$ principal submatrix of determinant $d^2$,
and $k\neq n/2$,
then $d \cdot (n-1)^{n/4-k/2} \in \mathcal{D}(n-k)$. 
\end{cor}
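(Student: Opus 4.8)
The plan is to exhibit the complementary principal submatrix $S(\alpha)$ as the desired witness and to compute its determinant via \Cref{cor:Jacobi}. So let $S$ be the given $n\times n$ skew-conference matrix and $\alpha\subseteq\{1,\dots,n\}$ with $|\alpha|=k$ and $\det S[\alpha]=d^2$. First I would record two preliminaries. Since $S$ is skew-symmetric with $(\det S)^2=\det(SS\trans)=\det((n-1)I)=(n-1)^n\neq 0$, we have $\det S\neq 0$, so $S$ cannot be an odd-order skew-symmetric matrix; hence $n$ is even. And $S(\alpha)$, being a principal submatrix of a matrix with zero diagonal and $\pm1$ off-diagonal entries that is skew-symmetric, is itself the Seidel matrix of the induced subtournament on $\{1,\dots,n\}\setminus\alpha$, a tournament of order $n-k$; thus $\sqrt{\det S(\alpha)}\in\mathcal{D}(n-k)$.

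Next I would establish the identity $c_{S[\alpha]}(x)\,(x^2+n-1)^{n/2-k}=c_{S(\alpha)}(x)$ in $\mathbb{R}(x)$. If $k<n/2$ this is precisely \Cref{cor:Jacobi} applied to $\alpha$. If $k>n/2$ (this is the one place the hypothesis $k\neq n/2$ is used), I would instead apply \Cref{cor:Jacobi} to the complement $\beta=\{1,\dots,n\}\setminus\alpha$, which has cardinality $n-k\le n/2$, obtaining $(x^2+n-1)^{k-n/2}c_{S[\beta]}(x)=c_{S(\beta)}(x)$; since $S[\beta]=S(\alpha)$ and $S(\beta)=S[\alpha]$, rearranging gives the same identity, now with a negative exponent, read as an equality of rational functions. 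In either case $x=0$ is not a pole, since $n-1>0$, so I may evaluate there.

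Evaluating at $x=0$ and using $c_M(0)=\det(-M)$ — so $c_{S[\alpha]}(0)=(-1)^k d^2$ and $c_{S(\alpha)}(0)=(-1)^{n-k}\det S(\alpha)$ — gives $(-1)^k d^2(n-1)^{n/2-k}=(-1)^{n-k}\det S(\alpha)$. Since $n$ is even the signs cancel, so $\det S(\alpha)=d^2(n-1)^{n/2-k}=\bigl(d\,(n-1)^{n/4-k/2}\bigr)^2$. Because $\det S(\alpha)$ is a nonnegative integer that is a perfect square and $d\ge 0$, the number $d\,(n-1)^{n/4-k/2}=\sqrt{\det S(\alpha)}$ is a nonnegative integer, and by the first paragraph it lies in $\mathcal{D}(n-k)$.

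The computation is routine once the identity is in hand; the only points needing care are the $k>n/2$ case, where \Cref{cor:Jacobi} must be applied to the complement and the resulting identity rearranged, and the fact that $(n-1)^{n/4-k/2}$ need not itself be an integer — which is why it is cleaner to verify that the \emph{square} $d^2(n-1)^{n/2-k}$ equals the integer $\det S(\alpha)$ rather than to manipulate $d\,(n-1)^{n/4-k/2}$ directly. I would also dispatch the degenerate subcase where $k$ is odd: then $n-k$ is odd, so $\det S(\alpha)=0$, forcing $d=0$, and the claim $0\in\mathcal{D}(n-k)$ holds trivially.
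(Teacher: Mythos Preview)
Your argument is correct and is exactly the intended one: the paper states this corollary without proof, as an immediate consequence of \Cref{cor:Jacobi} obtained by evaluating the polynomial identity at $x=0$, which is precisely what you do. Your handling of the case $k>n/2$ via the complement $\beta$ and your observations about parity and integrality are all appropriate; the only minor remark is that the hypothesis $k\neq n/2$ is not truly needed for the argument (when $k=n/2$ one may apply \Cref{cor:Jacobi} to $\alpha$ directly and the conclusion becomes the tautology $d\in\mathcal{D}(n/2)$), so your parenthetical that this is where the hypothesis is ``used'' slightly overstates its role.
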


Furthermore, for any Seidel tournament matrix the eigenvalues of each principal submatrix of $S$ \emph{interlace} with the eigenvalues of $S$. 

\bigskip
\noindent\textbf{Interlacing.}\\
    Let $A$ be an $n\times n$ real skew-symmetric matrix with eigenvalues $\{\lambda_j i\}_{j=1}^n$ so that $\lambda_1 \geq \dots \geq \lambda_n$.  Let $B$ be an $m\times m$ principal submatrix of $A$ with eigenvalues $\{\theta_j i\}_{j=1}^m$ so that $\theta_1\geq \dots \geq \theta_m$.
   Then 
        \[
            \lambda_{n-m+j} \leq \theta_j \leq \lambda_{j}.
        \]

This follows from an analogous result for real symmetric matrices (see \cite[Theorem 9.1.1]{GodsilAC}) and the fact that for any skew-symmetric matrix $S$, the eigenvalues of $S$ are roots of the eigenvalues of the symmetric matrix $S^2$. As a result, if $S$ is an $n\times n$ skew-conference matrix the eigenvalues of the principal submatrices of $S$ have modulus bounded by $\sqrt{n-1}$. This gives an obstruction to a matrix embedding as a principal submatrix of a skew-conference matrix.

\begin{proposition}
    Let $S$ be an $n\times n$ skew-conference matrix and let $R_k$ be the Seidel matrix of a transitive tournament on $k$ teams. For every constant $c>0$ there exists $N_c$ so that whenever $n>N_c$, $R_{n/c}$ is not a principal submatrix of $S$.
\end{proposition}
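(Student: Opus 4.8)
The plan is to combine the interlacing bound for eigenvalues of principal submatrices with an explicit lower bound on the largest eigenvalue of $R_k$. Interlacing tells us that if $R_k$ is a principal submatrix of the skew-conference matrix $S$, then every eigenvalue $\theta i$ of $R_k$ satisfies $|\theta|\le\sqrt{n-1}$. So it suffices to show that the spectral radius of $R_k$ (that is, the largest $|\theta|$ among eigenvalues $\theta i$ of $R_k$) grows faster than $\sqrt{n-1}$ once $k=n/c$; concretely, it suffices to prove that the spectral radius of $R_k$ is at least a fixed constant multiple of $k$ (or even of $\sqrt{k}$ times something unbounded), since then for $k = n/c$ we get a lower bound on the order of $n/c$, which eventually exceeds $\sqrt{n-1}$.

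First I would pin down the spectrum of $R_k$. Since $R_k$ is a polynomial in the basic nega-circulant $W_k$ (indeed $R_k = W_k + W_k^2 + \cdots + W_k^{k-1}$ up to the skew-symmetrization already built into the $W_k^m$), and the eigenvalues of $W_k$ are the $2k$-th roots of unity that are not $k$-th roots of unity — i.e.\ $\omega = e^{\pi i (2j+1)/k}$ for $j=0,\dots,k-1$ — the eigenvalues of $R_k$ are $\sum_{m=1}^{k-1}\omega^m = \frac{\omega^k - \omega}{\omega-1} = \frac{-1-\omega}{\omega-1}$, using $\omega^k=-1$. Writing $\omega = e^{i\phi}$ with $\phi = \pi(2j+1)/k$, a short computation gives $\frac{-1-\omega}{\omega-1} = i\cot(\phi/2)$ (or $-i\tan$, depending on normalization); the point is that the eigenvalues of $R_k$ are $i\cot\big(\tfrac{\pi(2j+1)}{2k}\big)$ for $j=0,\dots,k-1$. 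In particular, taking $j=0$, the eigenvalue with largest modulus has $|\theta| = \cot\big(\tfrac{\pi}{2k}\big)$, which is $\sim \tfrac{2k}{\pi}$ as $k\to\infty$; in any case $\cot(\pi/(2k)) \ge \tfrac{2}{\pi}\cdot\tfrac{2k}{\pi}$ for $k$ large, or one can just use the elementary bound $\cot x \ge \tfrac{1}{x} - x \ge \tfrac{1}{2x}$ for small $x>0$ to get $|\theta|\ge \tfrac{k}{\pi}$.

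Then I would finish by interlacing: if $R_{n/c}$ were a principal submatrix of $S$, interlacing forces $\tfrac{n/c}{\pi} \le \cot\big(\tfrac{\pi c}{2n}\big) \le \sqrt{n-1}$, which fails for all sufficiently large $n$ (the left side is linear in $n$, the right side is $O(\sqrt n)$). Choosing $N_c$ to be any threshold beyond which $\tfrac{n}{c\pi} > \sqrt{n-1}$ completes the argument. The only genuinely substantive step is the eigenvalue computation for $R_k$; everything else is interlacing (quoted above) plus elementary estimates. The main obstacle is therefore just carrying out the diagonalization of $R_k$ cleanly — handling the root-of-unity bookkeeping for $W_k$ and confirming $\omega^k = -1$ on the relevant eigenvalues — but this is routine and in fact the $W_k$-machinery is already set up in the proof of \Cref{lem: det rn is 1}. (If one wants to avoid the exact spectrum, an alternative is to lower-bound the spectral radius of $R_k$ via a test vector — e.g.\ using that $R_k$ restricted to a suitable two-dimensional subspace spanned by the alternating vector $x$ from \Cref{thm: quadratic} and a shift of it has a large entry, giving $\|R_k x\| \gtrsim k\|x\|$ — but the explicit spectrum is cleaner.)
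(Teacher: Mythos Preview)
Your proposal is correct and follows essentially the same approach as the paper: bound the spectral radius of $R_k$ from below linearly in $k$, then invoke interlacing against the eigenvalue bound $\sqrt{n-1}$ for principal submatrices of a skew-conference matrix. The only difference is that you derive the spectrum of $R_k$ yourself from the nega-circulant description $R_k=\sum_{m=1}^{k-1}W_k^m$ (obtaining eigenvalues $i\cot\!\big(\tfrac{\pi(2j+1)}{2k}\big)$ and hence spectral radius $\cot(\pi/(2k))\sim 2k/\pi$), whereas the paper cites \cite{Ito} for the formula $\frac{1+\zeta}{1-\zeta}$ and the resulting modulus $\frac{\sin(\pi/k)}{1-\cos(\pi/k)}=\cot(\pi/(2k))$; your self-contained derivation is a mild improvement in that it avoids the external reference and uses machinery already set up in \Cref{lem: det rn is 1}.
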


\begin{proof}
    By \cite{Ito} the eigenvalues of $R_k$ are $\frac{1+\zeta_k}{1-\zeta_k}$ where $\zeta_k$ is a $k$-th root of unity. Thus there exists an eigenvalue $\lambda_k$ of $R_k$ with modulus $|\lambda_k| = \frac{\sin(\pi/k)}{1-\cos(\pi/k)}$. As $k\to \infty$ this approaches $\frac{(\pi/k)}{(\pi/k)^2} = \frac{k}{\pi}$. Hence for $k\geq \frac{n}{c}$ there exists $N_c$ so that $n>N_c$ implies that $|\lambda_k|>\sqrt{n-1}$. By the interlacing, $R_k$ is not a  principal submatrix of an $n\times n$ skew-conference matrix where $n \geq N_c$.
\end{proof}

In particular, we have $N_2 \approx 9.5, N_3 \approx 21.9, N_4 \approx 39.$

Suppose there exists some $k$ so that there is no Seidel matrix of order $2k+1$ with eigenvalues $\pm\sqrt{n-1}i$. By \Cref{cor:skewresults} then there is no skew-conference matrix of order $4k$. Hence one could disprove the Hadamard Conjecture by showing that there exists some odd $2k+1$ so that no Seidel matrix of order $2k+1$ has eigenvalues $\pm i\sqrt{4k-1}$. However, this is never the case.

\begin{theorem}\label{thm: HC1}
    For each positive integer $k$ there exists a $(2k+1) \times (2k+1)$ Seidel matrix with eigenvalues $\pm\sqrt{4k-1}i$.
\end{theorem}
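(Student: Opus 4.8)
The plan is to reduce the statement to a spectral condition on $SS\trans$ and then build the matrix by induction. A real skew-symmetric matrix $S$ has $\sqrt{4k-1}\,i$ as an eigenvalue exactly when $4k-1$ is an eigenvalue of $SS\trans=-S^2$, and in that case the conjugate $-\sqrt{4k-1}\,i$ is automatically an eigenvalue as well; so it suffices to produce, for each $k\ge 1$, a $(2k+1)\times(2k+1)$ Seidel matrix $S_k$ with $4k-1\in\operatorname{spec}(S_kS_k\trans)$. In fact I would aim for the stronger statement that $S_k$ can be taken with characteristic polynomial $x\prod_{j=1}^{k}(x^2+4j-1)$; this matches the $n=3,5,7$ data in \Cref{fig: charpoly sets} and \Cref{fig: charpoly7} (for instance $x(x^2+3)(x^2+7)=x^5+10x^3+21x$ and $x(x^2+3)(x^2+7)(x^2+11)=x^7+21x^5+131x^3+231x$), and I would prove it by induction on $k$, with base case $S_1=R_3$, whose characteristic polynomial is $x^3+3x$.

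For the inductive step, I would border $S_k$ by two copies of a kernel vector. If $z\in\{\pm1\}^{2k+1}$ spans $\ker S_k$, put
\[
S_{k+1}=\left[\begin{array}{cc|c} 0 & 1 & z\trans\\ -1 & 0 & z\trans\\ \hline -z & -z & S_k\end{array}\right],
\]
which is again a Seidel matrix. Applying the Schur complement with respect to $xI-S_k$ and using that $S_k$ is normal and commutes with $S_kS_k\trans$, the $2\times 2$ Schur correction collapses (the ``skew part'' of each bilinear form $u\trans(xI-S_k)^{-1}v$ vanishes when $u=v=z$, and the ``even part'' has no poles because $z$ lies entirely in $\ker S_k$), yielding
\[
\det(xI-S_{k+1})=\det(xI-S_k)\cdot\bigl(x^2+2\lVert z\rVert^2+1\bigr)=\det(xI-S_k)\cdot(x^2+4k+3),
\]
so the characteristic polynomial of $S_{k+1}$ is $x\prod_{j=1}^{k+1}(x^2+4j-1)$ and $\pm\sqrt{4k+3}\,i$ are among its eigenvalues. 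I have verified this for $k=1$: bordering $R_3$ with $z=(-1,1,-1)\trans$ produces an order-$5$ Seidel matrix with characteristic polynomial $x(x^2+3)(x^2+7)$, as required.

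The step I expect to be the real obstacle is carrying the induction forward. Solving $S_{k+1}v=0$ gives $\ker S_{k+1}=\operatorname{span}\bigl((2k+1,\ -(2k+1),\ z\trans)\trans\bigr)$, which for $k\ge 1$ is not a scalar multiple of a $\pm 1$ vector, so there is no $\pm1$ kernel vector to border with at the next stage. To get around this I would strengthen the inductive hypothesis to also record a $\pm 1$ vector in $\ker S_k$, and then choose a more symmetric bordering — for instance border with $x$ and $-x$, where $x\in\{\pm1\}^{2k+1}$ agrees with $z$ in exactly $k$ coordinates, so that $(1,1,z\trans)\trans$ becomes the new kernel vector. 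Such a choice no longer yields a clean product characteristic polynomial, so one must instead check directly that $\sqrt{4k+3}\,i$ is still an eigenvalue; expanding $x\trans(\sqrt{4k+3}\,i\,I-S_k)^{-1}x$ in the eigenbasis of $S_k$, this reduces to a single relation among the nonnegative squared projections of $x$ onto the eigenspaces of $S_k$ (which sum to $2k+1$), and I would try to satisfy it by an averaging or parity argument over the $2^{2k+1}$ sign choices for $x$. If a fully uniform argument proves elusive, a fallback is to realize the spectrum for prime-power values $4k-1$ by deleting rows and columns from the doubly-regular (quadratic-residue) construction of \Cref{sec: large}, and to handle the remaining $k$ using the join of \Cref{lem:seidel of a join} with a regular sub-tournament, which preserves $\sqrt{4k-1}\,i$ as an eigenvalue on the orthogonal complement of the all-ones vector.
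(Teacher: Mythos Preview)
Your first inductive step is correct and pleasant: if $z\in\{\pm1\}^{2k+1}$ spans $\ker S_k$, then the Schur complement computation you describe really does yield $c_{S_{k+1}}(x)=c_{S_k}(x)(x^2+4k+3)$, since $S_kz=0$ forces $z\trans(xI-S_k)^{-1}z=(2k+1)/x$. So the step from $R_3$ to an order-$5$ matrix with characteristic polynomial $x(x^2+3)(x^2+7)$ is fine.

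The genuine gap is exactly where you locate it, and none of your proposed repairs closes it. After one bordering, $\ker S_{k+1}$ is spanned by $\bigl(2k+1,-(2k+1),z\trans\bigr)\trans$, which is not a $\pm1$ vector, so the same trick cannot be repeated. Your alternative bordering with $x$ and $-x$ does arrange a $\pm1$ kernel vector for the next stage, but, as you note, it destroys the product form of the characteristic polynomial; your reduction to ``a single relation among the squared projections of $x$'' followed by an unspecified averaging argument is not a proof, and there is no reason to expect such an $x$ to exist for an arbitrary $S_k$ produced by the previous step. The fallback via doubly-regular tournaments only handles those $k$ for which $4k-1$ is a prime power (which is not known to cover all $k$), and the join construction of \Cref{lem:seidel of a join} controls determinants, not individual eigenvalues; it does not in general place $\sqrt{4k-1}\,i$ in the spectrum. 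So as written the argument proves the theorem only for $k=1,2$.

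By contrast, the paper avoids induction entirely: it writes down a single explicit $(2k+1)\times(2k+1)$ Seidel matrix (two alternating border rows over the transitive block $R_{2k-1}$) and checks directly that $(1,-1,0,\ldots,0)$ is an eigenvector of $S^2$ with eigenvalue $-(4k-1)$. This sidesteps the need to keep a $\pm1$ kernel vector through an inductive chain, which is precisely the mechanism that breaks in your approach.
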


\begin{proof}
We construct $S$ explicitly. Let $R_{2k-1}$ denote the Seidel matrix of the transitive tournament on $2k-1$ teams. Take 
    \[
        S = \left[ \begin{array}{rr|rrrrr}
                    0 & 1 & -1 &1 &-1 &\cdots &-1 \\ 
                    -1& 0 & 1 &-1 & 1 & \cdots &1 \\ \hline
                    1 &-1 &&&&&\\
                    -1 &1 &&&&&\\
                    1 &-1 &&& R_{2k-1} &&\\
                    \vdots &\vdots &&&&&\\
                    1 &-1 &&&&&
            \end{array} 
            \right].
    \]
One can verify (e.g., by induction on $k$) that $S^2$ is a symmetric matrix with first two rows as follows:
        \[
        S^2 = \left[ \begin{array}{rrrrrrr}
                    -2k & 2k-1 & 1 &-1 &1 &\cdots &1 \\ 
                    2k-1& -2k & 1 &-1 & 1 & \cdots &1 \\ 
                    * &* &*&*&*&*&*\\
                    \vdots &\vdots &\vdots&\vdots&\vdots&\vdots&\vdots\\
                    * &* &*&*&*&*&*
            \end{array} 
            \right].
    \]
Also,
\begin{eqnarray*}
\left[  \begin{array}{ccccc} 1 & -1 & 0 & \cdots & 0 \end{array} \right] S^2&=&
\left[ \begin{array}{ccccc} -4k+1 & 4k-1 & 0 & \cdots & 0 \end{array} \right]\\
&=& -(4k-1) \left[  \begin{array}{ccccc} 1 & -1 & 0 & \cdots & 0 \end{array} \right].
\end{eqnarray*}
Hence $-(4k-1)$ is an eigenvalue of $S^2$, and $\sqrt{4k-1}i$ is an eigenvalue of $S$. 
\end{proof}

Finally, we consider the expected value of the characteristic polynomial, which turns out to be related to the matching polynomial for the corresponding undirected graph. 

\begin{theorem}\label{thm:expectedcharpoly}
    For any integer $n$, the expected value of the coefficient $c_{2k}$ of $x^{n-2k}$ in the characteristic polynomial over all Seidel tournament matrices of order $n$ is the number of matchings of size $k$ (i.e., using exactly $k$ edges) in the complete graph $K_n$. 
\end{theorem}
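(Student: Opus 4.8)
The plan is to mimic the computation of $\mathbb{E}(\det X)$ in \Cref{thm:expected}, but at the level of the coefficient $c_{2k}$ rather than just the constant term. First I recall the standard fact, used already in the proof of \Cref{prop:simple}, that the coefficient of $x^{n-k}$ in the characteristic polynomial $c_S(x)=\det(xI-S)$ equals $(-1)^k s_k$, where $s_k$ is the sum of the determinants of all $k\times k$ principal submatrices of $S$. Since each principal submatrix of a skew-symmetric matrix is itself skew-symmetric, a $k\times k$ principal submatrix has determinant $0$ when $k$ is odd and equals $(\operatorname{Pf})^2\ge 0$ when $k$ is even; this recovers the vanishing of the odd-index coefficients and shows we only need to handle $k=2j$. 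So I would write
\[
c_{2j} = \sum_{\substack{\alpha\subseteq\{1,\dots,n\}\\ |\alpha|=2j}} \det S[\alpha],
\]
and then take expectations and push $\mathbb{E}$ inside the finite sum.

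The key step is to evaluate $\mathbb{E}(\det X[\alpha])$ for a fixed $2j$-subset $\alpha$. Here I expand $\det X[\alpha]=(\operatorname{Pf} X[\alpha])^2$ via the Pfaffian-squared formula exactly as in \Cref{thm:expected}: it is a double sum over pairs $(M,N)$ of perfect matchings of the complete graph on the vertex set $\alpha$, with signs $\operatorname{sgn}(\sigma_M)\operatorname{sgn}(\sigma_N)$ and weight $\operatorname{wt}_X(M)\operatorname{wt}_X(N)$. By \eqref{eq:expected}, the expectation of $\operatorname{wt}_X(M)\operatorname{wt}_X(N)$ is $1$ exactly when every edge of the multigraph $\alpha_{M,N}$ has even multiplicity and is $0$ otherwise; as noted in the proof of \Cref{thm:expected}, the nonzero terms are precisely those with $M=N$ (the other components would be even cycles of length $\ge 4$, which contribute $0$), and for those the sign product is $+1$. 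Hence $\mathbb{E}(\det X[\alpha])$ equals the number of perfect matchings of the complete graph on $\alpha$, i.e. $(2j-1)!!$.

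Combining the two steps, $\mathbb{E}(c_{2j}) = \binom{n}{2j}(2j-1)!!$, which is exactly the number of ways to choose $2j$ vertices of $K_n$ and then pair them up perfectly — that is, the number of matchings of $K_n$ using exactly $j$ edges. I expect the only real subtlety is the bookkeeping to confirm that restricting a random Seidel matrix $X\in\mathcal{S}_n$ to a fixed index set $\alpha$ yields a uniformly random Seidel matrix on $|\alpha|$ vertices (so that \Cref{thm:expected} applies verbatim to the principal submatrix); this is immediate since the entries $k_{ij}$ for $i<j$ are independent, so any sub-collection indexed by pairs from $\alpha$ is again i.i.d. uniform on $\{\pm 1\}$. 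Everything else is the same matching-count argument already carried out, so no new obstacle arises; one should just be careful to state that the theorem, as phrased, reports $\mathbb{E}(c_{2k})$ and that $\mathbb{E}(c_{2k+1})=0$ by parity.
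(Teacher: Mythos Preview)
Your proposal is correct and follows essentially the same route as the paper: write $c_{2k}$ as the sum of $\det S[\alpha]$ over $2k$-subsets $\alpha$, apply linearity of expectation, and invoke \Cref{thm:expected} (or its argument) on each principal submatrix to get $\mathbb{E}(\det X[\alpha])=(2k-1)!!$, yielding $\binom{n}{2k}(2k-1)!!$. The paper's proof is just the condensed version, citing \Cref{thm:expected} directly rather than re-expanding the Pfaffian-squared sum.
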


\begin{proof}
Note that the number of matchings with exactly $k$ edges on the complete graph with $n$ vertices is the sum of the number of perfect matchings over all vertex sets $\alpha$ having $2k$ vertices.  So, applying \Cref{thm:expected} we have
\[\mathbb{E}( c_{2k}) = \sum \mathbb{E}( \det K[\alpha])=\sum ((2k-1)!!)=\binom{n}{2k} (2k-1)!!,\]
where the sums are taken over all subsets of $\{1,2, \dots, n\}$ of size $2k.$ The result now follows, since $\binom{n}{2k} (2k-1)!!$ is the number of matchings of size $k$ of the complete graph $K_n$.
\end{proof}

It follows that the expected characteristic polynomial over all Seidel torunament matrices of order $n$ is given by 
\[
c(x) = \sum_{k=0}^{\lfloor n/2 \rfloor} \binom{n}{2k} (2k-1)!! x^{n-2k}.
\]
This is related to the matching polynomial for $K_n$, also known as the probabilist's Hermite polynomial; indeed, the coefficients of the matching polynomial have the same absolute value but alternate in sign.

The preceding results on the expected value of characteristic polynomials over all possible Seidel matrices are not restricted to tournaments. One can extend the notion of Seidel matrices from tournaments (i.e., orientations of the complete graph) to orientations on arbitrary graphs as follows: For a graph $G$ on $n$ vertices, a {\it Seidel matrix of $G$}  is an $n\times n$ skew-symmetric matrix $S=[s_{ij}]$ such that  $s_{ij} \in \{ \pm 1\}$  if $ij$ is an edge of $G$, and $s_{ij}=0$ otherwise. Thus, $S$ represents the orientation of $G$ where the edge $ij$ of $G$
is oriented from $i$ to $j$ provided $s_{ij}>1$.

 More generally, the proofs of \Cref{thm:expected} and  \Cref{thm:expectedcharpoly} can be applied to any graph $G$ to obtain the following.

\begin{theorem}\label{thm: generalexpectedcharpoly}
    Let $G$ be any graph with $n$ vertices. Then the expected characteristic polynomial 
    over the uniform distribution of the Seidel matrices of the graph $G$ is
    \[c(x) = \sum_{k=0}^{\lfloor n/2 \rfloor} m_k x^{n-2k},\]
    where $m_k$ is the number of matchings of $G$ having exactly $k$ edges. 
\end{theorem}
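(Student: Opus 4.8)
The plan is to mimic the proof of \Cref{thm:expectedcharpoly} verbatim, tracking only where the complete-graph hypothesis was actually used. The key point from \Cref{prop:simple}(c) (or more precisely the standard formula for characteristic-polynomial coefficients) is that if $K=[k_{ij}]$ is the random skew-symmetric matrix associated with a uniformly random Seidel matrix of $G$, then the coefficient of $x^{n-j}$ in $c_K(x)$ is $(-1)^j$ times the sum of the determinants of all $j\times j$ principal submatrices $K[\alpha]$ with $|\alpha|=j$. Taking expectations and using linearity, $\mathbb{E}(c_j)=(-1)^j\sum_{|\alpha|=j}\mathbb{E}(\det K[\alpha])$. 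For odd $j$ each $K[\alpha]$ is odd-order skew-symmetric, so $\det K[\alpha]=0$ and $\mathbb{E}(c_j)=0$; this recovers the fact that only even-index coefficients survive. For $j=2k$ even, the heart of the matter is to compute $\mathbb{E}(\det K[\alpha])$ for a fixed $\alpha$ with $|\alpha|=2k$.

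First I would observe that $K[\alpha]$ is itself a random skew-symmetric matrix: its entries $k_{ij}$ for $i<j$, $i,j\in\alpha$, are independent $\pm1$ when $ij$ is an edge of $G$, and are identically $0$ when $ij$ is a non-edge of $G$. Writing $\det K[\alpha]=(\mathrm{Pf}\,K[\alpha])^2$ and expanding the Pfaffian squared as a double sum over perfect matchings $M,N$ of the complete graph on the vertex set $\alpha$ (exactly as in the proof of \Cref{thm:expected}), linearity of expectation gives
\[
\mathbb{E}(\det K[\alpha]) = \sum_{M,N}\mathrm{sgn}(\sigma_M)\mathrm{sgn}(\sigma_N)\,\mathbb{E}\!\left(\mathrm{wt}_K(M)\,\mathrm{wt}_K(N)\right).
\]
Now I invoke the analogue of \eqref{eq:expected}: a product $\prod_{(i,j)}k_{ij}^{m_{ij}}$ has expectation $1$ if every factor has even multiplicity \emph{and every $(i,j)$ appearing actually indexes an edge of $G$}, and expectation $0$ otherwise (if some non-edge variable appears, that variable is the constant $0$, killing the term; if some edge variable appears with odd multiplicity, independence and $\mathbb{E}(k_{ij})=0$ kill it). The union $M\cup N$ is a multigraph whose components are either doubled edges or even cycles of length $\geq 4$; the cycle case contributes $0$ by the odd-multiplicity rule exactly as before, so only $M=N$ survives, and such a term contributes $1$ precisely when every edge of $M$ is an edge of $G$, i.e.\ when $M$ is a perfect matching of the induced subgraph $G[\alpha]$. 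Hence $\mathbb{E}(\det K[\alpha])$ equals the number of perfect matchings of $G[\alpha]$.

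Summing over all $\alpha$ with $|\alpha|=2k$ then gives $\mathbb{E}(c_{2k}) = \sum_{|\alpha|=2k}(\text{\# perfect matchings of }G[\alpha])$, and every matching of $G$ with exactly $k$ edges arises from a unique such $\alpha$ (namely the set of vertices it covers), so this sum is exactly $m_k$, the number of $k$-edge matchings of $G$. This yields the claimed formula $c(x)=\sum_{k=0}^{\lfloor n/2\rfloor} m_k x^{n-2k}$. I do not anticipate a serious obstacle here; the only point requiring any care is the bookkeeping in the modified version of \eqref{eq:expected} — specifically making explicit that a term dies not only when an edge-variable has odd degree but also whenever a non-edge-variable appears at all — and confirming that the sign cancellation argument from the proofs of \Cref{thm:expected} and \Cref{thm:square} (that surviving terms have $\mathrm{sgn}(\sigma_M)\mathrm{sgn}(\sigma_N)=1$) goes through unchanged, which it does since it used only the combinatorial structure of $M\cup N$ and not the graph $G$.
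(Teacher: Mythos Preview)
Your proposal is correct and is precisely the approach the paper intends: it explicitly says that the proofs of \Cref{thm:expected} and \Cref{thm:expectedcharpoly} can be applied to any graph $G$, and you have carried this out in detail, correctly identifying the one new ingredient (that a monomial containing a non-edge variable vanishes identically). The bookkeeping you flag---the modified expectation rule and the observation that the surviving terms $M=N$ automatically have $\mathrm{sgn}(\sigma_M)\mathrm{sgn}(\sigma_N)=1$---is handled correctly.
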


We also remark that \Cref{thm: generalexpectedcharpoly} could be proved using \cite[Theorem 2.3]{HL}, which gives a combinatorial formula for the coefficients of the characteristic polynomial for a Seidel matrix. 

\bibliography{refs.bib}
\bibliographystyle{plain}
\end{document}